\newcommand{\amsprimary}[1]{{\footnotesize\noindent AMS 2010 \textit{Mathematics subject
classification:} Primary #1\vspace{1pc}}}
\newcommand{\keywordsnames}[1]{{\footnotesize\noindent\textit{Key words:} #1\vspace{1pc}}}
\newtheorem{theorem}{Theorem}
\newtheorem{teo}{Theorem}
\newtheorem{prop}[teo]{Proposition}
\newtheorem{lemma}{Lemma}
\newtheorem{definition}{Definition}
\theoremstyle{remark}
\pgfplotsset{%
every x tick/.style={black, thick},
every y tick/.style={black, thick},
every tick label/.append style = {font=\footnotesize},
every axis label/.append style = {font=\footnotesize},
compat=1.12
  }
\title[Maximum Principle and Iteration]{A note on the Maximum Principle and the Iteration Method for elliptic equations}
\author{Jean Cortissoz}
\address[Jean Cortissoz]{Universidad de los Andes, Bogot\'a DC, COLOMBIA}
\email[Corresponding author]{jcortiss@uniandes.edu.co}
\author{Jonat\'an Torres-Orozco}
\address[Jonat\'an Torres-Orozco]{Universidad de los Andes, Bogot\'a DC, COLOMBIA}
\email{j.torreso@uniandes.edu.co}
\date{}
\begin{document}

\maketitle

\begin{abstract}
We use an iteration procedure propped up by a 
a classical form of the maximum principle to show 
the existence of solutions to a nonlinear Poisson
equation with Dirichlet boundary conditions. These 
methods can be applied to the case of special unbounded
domains, and can be adapted to show
the existence of nontrivial solutions to systems,
which we show via some examples.
   
\end{abstract}
 
{\keywordsnames {Nonlinear elliptic equations, iteration methods.}}

{\amsprimary {35J60}}

\section{Introduction.}

The object of this paper is the study of the nonlinear Dirichlet problem

\begin{equation}
\label{eq: elliptic_prob_gen0}
  \left\{
  \begin{array}{l}
  -\Delta u = \lambda\left(x\right) f\left(u\right) + h\left(x\right) \quad \mbox{in}\quad \Omega\\
  u = 0 \quad \mbox{on} \quad \partial \Omega.
  \end{array}
  \right.
\end{equation}
This boundary value problem, and variations and generalisations
of it (see \cite{Carl, Dalmasso} and the
references therein), have been studied by several authors, and it has applications in physics and
geometry (in this case, as we show below, in the problem of conformal deformation
of metrics).

It is our purpose in this paper to give some existence results for the above Boundary Value Problem
(BVP) using a version of the maximum principle for domains bounded
in one direction (so $\Omega$ can be unbounded), and iteration methods
for contracting maps, in some cases, and for nondecreasing maps, i.e.,
monotone iteration, in some other cases. We will also show, via a few examples, how these 
results can be extended to the case of unbounded domains and to systems.
Regarding systems, we show, via our techniques, the existence 
of nontrivial solutions to Lane-Emden systems in compact and
unbounded domains, giving some (perhaps)
interesting explicit estimates in the process
(see Sections \ref{subsect:sublinear} and \ref{sect:systems} below).

Iteration and fixed point methods have been used since their invention to solve differential equations. In the
case of elliptic problems, Schauder's fixed point theorem have been a preferred choice,
however it requires some compactness, and do not apply to the
case when the nonlinearity growth is beyond certain critical behaviour. Also,
the possible presence of discontinuous nonlinearities does not allow a direct use of 
the Inverse Function Theorem. To overcome these difficulties,
nonstandard iteration methods and fixed point results have been developed.
In fact, the methods of this paper have been inspired by the results presented in the
papers \cite{Amann, Carl}, with the agreeable surprise that 
if we just require that $\lambda, h \in L^{\infty}\left(\Omega\right)$
a classical form
of the maximum principle, that has perhaps been a bit overlooked in
the literature, is enough to obtain our results. Among other
features, our results allow, in the righthand side of (\ref{eq: elliptic_prob_gen0}), the presence of nonlinearities that can have growth 
beyond critical in  bounded and (certain type of) unbounded domains, and which are
mildly discontinuous, as we show in Theorems \ref{thm:mainthm_discontinuous} and \ref{thm:discontinuous_noncompact},
and in the application in Section \ref{sect:application_discontinuous}. Also, related to this work, the
reader might find the paper \cite{Berestycki}, where the symmetry of solutions
to elliptic problems
in unbounded domains as those considered in this paper are studied,
of interest. 

From our
point of view, although the methods
presented in this note are elementary and not technically demanding, 
they not only give interesting existence results but they also give 
explicit estimates on the solutions obtained. Furthermore, these methods can be extended to
lefthand sides
with more general uniform elliptic operators,
and also to nonlinearities of the form $f\left(x,u\right)$: 
we leave this to the 
interested reader. 

\subsection{Structure of the paper.} Our main results are 
presented in Section \ref{sect:bounded} for bounded domains, and in Section
\ref{sect:noncompact} for unbounded domains. We have included some examples, with the hope that
they will help illustrate and show how to extend our results.
In Section \ref{sect:preliminaries}, we present some preliminaries, 
and the version of the maximum principle we shall use, which is not
due to us, and which is, in fact, a classical estimate: hopefully, this
will be a pleasant surprise for the reader.

\section{Preliminaries. The Maximum Principle.}
\label{sect:preliminaries}

Given a domain $\Omega \in \mathbb{R}^n$ 
(which we shall assume from now on to be $C^{1,1}$), we define the slab diameter $d:=d\left(\Omega\right)$
of $\Omega$
as the infimum of the distances of two parallel planes so that $\Omega$ is totally
contained in the region between them.

For an open subset $\Omega\subset \mathbb{R}^n$, we define the 
Sobolev space $W_0^{1,2}\left(\Omega\right)$ as the completion
of $C_0^{\infty}\left(\Omega\right)$ with respect to the norm
\[
\left\|u\right\|^2_{1,2}=\int_{\Omega} \left|u\right|^2+\left|\nabla u\right|^2\, dV.
\]

Given the boundary value problem (BVP), 
\[
\left\{
  \begin{array}{l}
  -\Delta u = f\left(x,u\right) + h \quad \mbox{in}\quad \Omega\\
  u = 0 \quad \mbox{on} \quad \partial \Omega,
  \end{array}
  \right.
\]
$\Omega$ a bounded domain, we say that $u\in W^{1,2}_0\left(\Omega\right)$ is a weak solution if for
any $\varphi \in C_0^{\infty}\left(\Omega\right)$, the following identity
holds
\[
\int_{\Omega} \nabla u \nabla \varphi\,dx = \int_{\Omega}f\left(x,u\right)\varphi\, dx
+\int_{\Omega}h\varphi\, dx.
\]

The following result, a maximum principle for solutions to the Dirichlet problem
of Laplace's equation, which is classical, is our main tool.

\begin{prop}[Maximum Principle]
\label{prop:maximump}
Let $\Omega$  be a bounded open subset of $\mathbb{R}^n$ 
Let $h\in L^{\infty}\left(\Omega\right)$. Then the 
Dirichlet problem
\[
\left\{
\begin{array}{l}
-\Delta u = h \quad \mbox{in}\quad \Omega\\
u=0 \quad \mbox{on}\quad \partial \Omega,
\end{array}
\right.
\]
has a unique solution in $u\in W_0^{1,2}\left(\Omega\right)\cap L^{\infty}\left(\Omega\right)$.
Furthermore, we have that
\[
\left\|u\right\|_{\infty}\leq c\left\|h\right\|_{\infty},
\]
where $c\leq \dfrac{d^2}{8}$, and $d$ is the slab diameter of $\Omega$.
\end{prop}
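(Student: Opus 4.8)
The plan is to establish existence and uniqueness first, and then the quantitative bound, since the existence part is completely standard and the bound is where the slab diameter enters.

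\medskip

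\noindent\textbf{Existence and uniqueness.} First I would observe that the bilinear form $a(u,v)=\int_\Omega \nabla u\cdot\nabla v\,dx$ is bounded and coercive on $W_0^{1,2}(\Omega)$: coercivity is exactly the Poincar\'e inequality, which holds because $\Omega$ is contained in a slab of width $d$ (and this is the one place where boundedness in one direction, rather than full boundedness, is all that is needed). Since $h\in L^\infty(\Omega)\subset L^2(\Omega)$ for $\Omega$ bounded, the functional $v\mapsto\int_\Omega hv\,dx$ is bounded on $W_0^{1,2}(\Omega)$, so the Lax--Milgram theorem gives a unique weak solution $u\in W_0^{1,2}(\Omega)$. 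Uniqueness in the stated class follows from uniqueness in $W_0^{1,2}$. The fact that $u\in L^\infty(\Omega)$ then follows from the sup-bound I prove next (or, if one prefers, from standard De Giorgi--Nash--Moser / Stampacchia $L^\infty$ estimates for bounded $\Omega$ with $L^\infty$ right-hand side), so strictly the $L^\infty$ membership and the quantitative bound are proved together.

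\medskip

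\noindent\textbf{The bound $\|u\|_\infty\le \tfrac{d^2}{8}\|h\|_\infty$.} This is the heart of the matter and is proved by an explicit barrier. After a rotation assume $\Omega\subset\{x: 0<x_1<d\}$. Let $M=\|h\|_\infty$ and set
\[
w(x)=\frac{M}{2}\,x_1\,(d-x_1).
\]
Then $-\Delta w = -\partial_{x_1}^2 w = M\ge 0$ throughout, with equality, so $-\Delta(w\pm u)\ge M\mp(-\Delta u)=M\mp h\ge 0$; that is, $w-u$ and $w+u$ are weak supersolutions of $-\Delta(\cdot)\ge 0$. On $\partial\Omega$ we have $w\ge 0$ (since $0\le x_1\le d$ there) and $u=0$, so $w\pm u\ge 0$ on $\partial\Omega$. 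By the weak maximum principle for subharmonic functions in $W_0^{1,2}$ (testing against the negative part $(w\pm u)^-\in W_0^{1,2}(\Omega)$ gives $\int_\Omega|\nabla (w\pm u)^-|^2\le 0$, hence $(w\pm u)^-\equiv 0$), we conclude $w\pm u\ge 0$, i.e. $|u|\le w$ a.e. in $\Omega$. Finally $\max_{0\le x_1\le d} \tfrac{M}{2}x_1(d-x_1)=\tfrac{M}{2}\cdot\tfrac{d^2}{4}=\tfrac{d^2}{8}M$, which gives $\|u\|_\infty\le \tfrac{d^2}{8}\|h\|_\infty$ and in particular $u\in L^\infty(\Omega)$.

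\medskip

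\noindent The main obstacle, such as it is, is a technical one: justifying the comparison argument at the level of weak solutions in $W_0^{1,2}(\Omega)$ (rather than for classical solutions), i.e.\ verifying that $(w\pm u)^-$ is an admissible test function and that the resulting energy inequality forces it to vanish. This is routine but must be stated carefully, especially if $\Omega$ is only $C^{1,1}$; everything else --- Poincar\'e, Lax--Milgram, and the one-variable optimization of the barrier --- is immediate.
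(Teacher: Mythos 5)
Your proposal is correct, and the barrier you use is in fact exactly the one the paper uses (up to a translation of the slab: your $w(x)=\tfrac{M}{2}x_1(d-x_1)$ on $\{0<x_1<d\}$ is the paper's $v=\tfrac{1}{2}(\tfrac{d^2}{4}-x_n^2)\|h\|_\infty$ shifted by $d/2$). Where you genuinely diverge from the paper is in how the $L^\infty$ right-hand side is handled. The paper first invokes the classical maximum principle (Theorem 3.7 in Gilbarg--Trudinger) for \emph{smooth} $h$, and then recovers the estimate for $h\in L^\infty$ by mollifying, using $L^p$ elliptic theory to pass to the limit, and sending $p\to\infty$ in $\|u\|_{L^p}\le c\,\mathrm{Vol}(\Omega)^{1/p}\|h\|_\infty$. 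You instead carry out the comparison entirely at the weak level in one step, by testing the weak formulations of $-\Delta(w\pm u)\ge 0$ against $(w\pm u)^-$. This avoids the approximation argument altogether and is arguably cleaner, at the price of the point you yourself flag: one must verify that $(w\pm u)^-\in W_0^{1,2}(\Omega)$. This is true --- since $w\ge 0$ on $\overline\Omega$, approximating $u$ by $\varphi_n\in C_0^\infty(\Omega)$ gives $(w+\varphi_n)^-$ compactly supported with $(w+\varphi_n)^-\to(w+u)^-$ in $W^{1,2}$ --- but it does need to be said, and you are right that it is the only non-immediate technical point. One small slip: you wrote $-\Delta(w\pm u)\ge M\mp(-\Delta u)$; the correct identity is $-\Delta(w\pm u)=M\pm h$, but since both $M+h\ge 0$ and $M-h\ge 0$ when $|h|\le M$, the conclusion is unaffected. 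Both routes are valid; the paper's buys reuse of a textbook theorem, yours buys a self-contained weak-formulation proof.
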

\begin{proof}
The first part of the theorem is a direct consequence of Theorem 8.3 in \cite{GilbargTrudinger}.
The second part follows from Theorem 3.7 in \cite{GilbargTrudinger}
and approximation argument. Indeed, if $h$ is smooth,
by assuming that $\Omega$ is contained in the slab
\[
\Pi_d=\left\{\left(x_1, \dots, x_{n-1}, x_n\right)\in \mathbb{R}^n\,:\, -\frac{d}{2}<x_{n}
<\frac{d}{2}\right\},
\]
and taking
\[
v= \frac{1}{2}\left(\frac{d^2}{4}-x_n^2\right)\left\|h\right\|_{\infty},
\]
in the proof of Theorem 3.7 in \cite{GilbargTrudinger}, we obtain the result for
$h$ smooth. Next, if $h\in L^{\infty}\left(\Omega\right)$, take a sequence of
smooth functions $h_n$ converging to $h$ in $L^p$, $1\leq p<\infty$, and such that
$\left\|h_n\right\|_{\infty}\leq \left\|h\right\|_{\infty}$ 
(this can be done, for instance, using Friedrichs' mollifiers).
Let $u_n$ be the solution to the Dirichlet problem with $h_n$ on the righthand side.
By the $L^p$ theory, $u_n\rightarrow u$ in $L^p\left(\Omega\right)$.
By the Maximum
Principle
\[
\left\|u_n\right\|_{\infty}\leq c\left\|h_n\right\|_{\infty},
\]
and hence
\[
\left\|u_n\right\|_{L^p\left(\Omega\right)}\leq 
c\mbox{Vol}\left(\Omega\right)^{\frac{1}{p}}\left\|h\right\|_{\infty},
\]
from we which can conclude that
\[
\left\|u\right\|_{L^p\left(\Omega\right)}\leq 
c\mbox{Vol}\left(\Omega\right)^{\frac{1}{p}}\left\|h\right\|_{\infty}.
\]
Since $p$ is arbitrary, we obtain the result.

\end{proof}

The previous proposition is stated in \cite{GilbargTrudinger} for more general
elliptic operators. The reader must take this into account, as most
of the results and applications to be described below also apply to
more general elliptic operators, and not just to the Laplacian.

\bigskip
In this paper, we shall use a simple (and we would indeed call the
natural) iteration method, which is defined as
follows. Given $\lambda, h\in L^{\infty}\left(\Omega\right)$,
and BVP (\ref{eq: elliptic_prob_gen0}), define $u_0$ as the solution to the BVP
\[
\left\{
\begin{array}{l}
-\Delta u_0 = h, \quad \mbox{in}\quad \Omega\\
u_0 = 0 \quad \mbox{on} \quad \partial \Omega,
\end{array}
\right.
\]
and once $u_n\in W^{1,2}_0\left(\Omega\right)\cap L^{\infty}\left(\Omega\right)$ 
is defined, we let $u_{n+1}\in W^{1,2}_0\left(\Omega\right)\cap L^{\infty}\left(\Omega\right)$ be the solution to
\[
\left\{
\begin{array}{l}
-\Delta u_{n+1}=\lambda\left(x\right) f\left(u_n\right)+h\left(x\right) \quad \mbox{in}\quad \Omega\\
u_{n+1} = 0 \quad \mbox{on} \quad \partial \Omega,
\end{array}
\right.
\]
which does exist by Proposition \ref{prop:maximump}.

\bigskip
We need to show that when the method converges, it does converge, under special
circumstances, towards a weak solution of the BVP. 
Hence, the need of the following lemma.
\begin{lemma}
\label{lemma:convergence}
Let $u_n$ be a sequence in $L^{\infty}\left(\Omega\right)$ produced
from the iteration procedure described above, and such that
$u_n \rightarrow u$ a.e., and
such that
$f\left(u\right)$ remains uniformly bounded along the sequence $u_n$. 
If $f$ is continuous,
then $u$ is a weak solution to the BVP (\ref{eq: elliptic_prob_gen0}). If the sequence $u_n$ is nondecreasing
and $f$ is lower semicontinuous, then $u$ is a weak solution to the
BVP (\ref{eq: elliptic_prob_gen0}).
\end{lemma}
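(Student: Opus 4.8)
The plan is to pass to the limit in the weak formulation of the iteration step. Fix a test function $\varphi \in C_0^{\infty}\left(\Omega\right)$. By construction, for every $n$ we have
\[
\int_{\Omega} \nabla u_{n+1}\nabla\varphi\,dx = \int_{\Omega}\lambda\left(x\right)f\left(u_n\right)\varphi\,dx + \int_{\Omega}h\varphi\,dx.
\]
The right-hand side is the easy part: since $f\left(u_n\right)$ is uniformly bounded (by hypothesis) and $u_n\to u$ a.e., continuity of $f$ gives $f\left(u_n\right)\to f\left(u\right)$ a.e., and the dominated convergence theorem (the dominating function being the constant bound times $\left|\lambda\varphi\right|\in L^1$, as $\lambda\in L^{\infty}$ and $\varphi$ has compact support) yields $\int_{\Omega}\lambda f\left(u_n\right)\varphi\,dx \to \int_{\Omega}\lambda f\left(u\right)\varphi\,dx$. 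So the limiting identity will read $\int_{\Omega}\nabla u\,\nabla\varphi\,dx = \int_{\Omega}\lambda f\left(u\right)\varphi\,dx + \int_{\Omega}h\varphi\,dx$ \emph{provided} we can justify $\int_{\Omega}\nabla u_{n+1}\nabla\varphi\,dx \to \int_{\Omega}\nabla u\,\nabla\varphi\,dx$, i.e. that $u\in W^{1,2}_0\left(\Omega\right)$ and $u_{n+1}\rightharpoonup u$ weakly there.

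The main obstacle is therefore the passage to the limit on the left-hand side, and the key point is that the $W^{1,2}_0$-norms of the $u_n$ are uniformly bounded. First I would note that the right-hand sides $\lambda f\left(u_n\right)+h$ are uniformly bounded in $L^{\infty}\left(\Omega\right)$, hence in $L^2\left(\Omega\right)$ (using $\left|\Omega\right|<\infty$), with a bound $M$ independent of $n$. Testing the equation for $u_{n+1}$ against $u_{n+1}$ itself gives $\int_{\Omega}\left|\nabla u_{n+1}\right|^2\,dx = \int_{\Omega}\left(\lambda f\left(u_n\right)+h\right)u_{n+1}\,dx \leq M\left\|u_{n+1}\right\|_{L^2}$, and by Poincar\'e's inequality $\left\|u_{n+1}\right\|_{L^2}\leq C_P\left\|\nabla u_{n+1}\right\|_{L^2}$; combining these bounds $\left\|\nabla u_{n+1}\right\|_{L^2}\leq C_P M$ uniformly in $n$. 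Thus $\left(u_n\right)$ is bounded in the Hilbert space $W^{1,2}_0\left(\Omega\right)$, so a subsequence converges weakly to some $w\in W^{1,2}_0\left(\Omega\right)$; since $u_n\to u$ a.e. and (again by the uniform $L^{\infty}$ bound on the right-hand sides together with elliptic estimates, or simply by boundedness in $L^2$) $u_n\to u$ in $L^2$, we identify $w=u$. Weak convergence in $W^{1,2}_0$ is exactly what makes $\int_{\Omega}\nabla u_{n}\nabla\varphi\,dx\to\int_{\Omega}\nabla u\,\nabla\varphi\,dx$, and the first assertion follows.

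For the second assertion, where $u_n$ is nondecreasing and $f$ is only lower semicontinuous, the same uniform $W^{1,2}_0$-bound and weak convergence argument for the left-hand side goes through unchanged. What changes is the treatment of $\int_{\Omega}\lambda f\left(u_n\right)\varphi\,dx$: lower semicontinuity gives only $f\left(u\right)\leq \liminf_n f\left(u_n\right)$ pointwise, which is not enough by itself. Here I would first reduce to $\varphi\geq 0$ (a general test function splits as $\varphi^{+}-\varphi^{-}$), and then observe that along a nondecreasing sequence one actually has $\lim_n f\left(u_n\right) = f\left(u\right)$ a.e.: indeed, for a.e. $x$ the real sequence $u_n\left(x\right)$ increases to $u\left(x\right)$, so lower semicontinuity gives $\liminf_n f\left(u_n\left(x\right)\right)\geq f\left(u\left(x\right)\right)$, while monotonicity and a second application of lower semicontinuity along the tail forces the reverse inequality on the $\limsup$; combined with the uniform bound on $f\left(u_n\right)$, dominated convergence again yields the desired limit. (If $f$ is additionally assumed nondecreasing, as is typical in monotone iteration, this is even more transparent, since then $f\left(u_n\right)$ is itself monotone and one invokes monotone convergence.) This closes the argument; the only genuinely delicate step, in both cases, is securing the uniform $W^{1,2}_0$ bound so that the Dirichlet form passes to the limit.
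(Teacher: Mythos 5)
Your treatment of the linear (left-hand) side is correct but genuinely different from the paper's. You derive a uniform bound on $\left\|\nabla u_{n+1}\right\|_{L^2}$ by testing the iteration equation with $u_{n+1}$ and invoking Poincar\'e, and then pass to the limit by weak compactness in $W^{1,2}_0\left(\Omega\right)$, identifying the weak limit with the a.e.\ limit $u$. The paper instead proves \emph{strong} convergence: subtracting the equations for $u_m$ and $u_k$ and testing with $u_m-u_k$ gives $\int_{\Omega}\left|\nabla\left(u_m-u_k\right)\right|^2\,dV\leq \left(2A\Lambda+\left\|h\right\|_{\infty}\right)\int_{\Omega}\left|u_m-u_k\right|\,dV$, and Egorov's theorem (a.e.\ convergence plus the uniform $L^\infty$ bound) makes the right side small, so the iterates are Cauchy in $W^{1,2}_0\left(\Omega\right)$. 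Both routes are legitimate; yours avoids Egorov and only needs weak convergence, which indeed suffices to pass to the limit in the Dirichlet form, while the paper's yields the stronger conclusion that $u_n\rightarrow u$ in $W^{1,2}_0$. Two small repairs in your version: the parenthetical claim that $u_n\rightarrow u$ in $L^2$ ``simply by boundedness in $L^2$'' is not a valid justification (a.e.\ convergence plus an $L^2$ bound does not give strong $L^2$ convergence); use the uniform $L^\infty$ bound on the iterates supplied by the maximum principle, or Rellich along the weakly convergent subsequence --- either is enough to identify the weak limit. And since the identity couples $u_{n+1}$ with $u_n$, either observe that the full sequence converges weakly (unique weak limit point) or pass to the limit along the shifted indices; this is routine.

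The genuine gap is in the lower semicontinuous case. Your assertion that for $u_n\left(x\right)\uparrow u\left(x\right)$ a ``second application of lower semicontinuity along the tail forces the reverse inequality on the limsup'' is false: lower semicontinuity only gives $\liminf_n f\left(u_n\left(x\right)\right)\geq f\left(u\left(x\right)\right)$ and provides no upper bound on the limsup. For instance, $f\left(t\right)=1$ for $t<1$ and $f\left(t\right)=0$ for $t\geq 1$ is lower semicontinuous, yet $f\left(1-1/n\right)=1\not\rightarrow f\left(1\right)=0$ along the increasing sequence $1-1/n$. What actually closes the argument is the monotonicity of $f$, which you relegate to a parenthesis: if $f$ is nondecreasing then $f\left(u_n\right)\leq f\left(u\right)$, so $\limsup_n f\left(u_n\right)\leq f\left(u\right)$, and combined with lower semicontinuity one gets $f\left(u_n\right)\rightarrow f\left(u\right)$ a.e.; dominated convergence then finishes exactly as in the continuous case (for a nondecreasing $f$, lower semicontinuity is precisely left-continuity, which is what increasing iterations need). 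This is how the lemma is in fact used in the paper --- in Theorems \ref{thm:mainthm_holder}, \ref{thm:mainthm_discontinuous} and \ref{thm:discontinuous_noncompact} the nonlinearity is nondecreasing --- and, to be fair, the paper's own proof is equally terse at this point, merely asserting that the same conclusion is reached. So you should make monotonicity of $f$ (or at least left-continuity along increasing sequences) an explicit ingredient of this step; as written, your argument for the lower semicontinuous case does not go through.
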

\begin{proof}
Assume that for all $n$, $\left|f\left(u_n\right)\right|\leq A$ and $u_n\leq M$.
For any $m, k$ we have that
\begin{eqnarray*}
\int_{\Omega}\left|\nabla\left(u_m-u_k\right)\right|^2\, dV
&=&\int_{\Omega} \lambda \left[f\left(u_{m-1}\right)-f\left(u_{k-1}\right)\right]
\left(u_m-u_k\right)+\\
&&\qquad\qquad\qquad \qquad \qquad \qquad h\left(x\right)\left(u_{m}-u_{k}\right)\,dV\\
&\leq& \left(2A\Lambda + \left\|h\right\|_{\infty}\right) \int_{\Omega} \left|u_{m}-u_{k}\right|
\, dV,
\end{eqnarray*}
where $\Lambda=\left\|\lambda\right\|_{\infty}$.
As $u_n\rightarrow u$, by Egorov's theorem, given $\epsilon >0$, there is 
$\Omega'\subset \Omega$, such that 
\[
V\left(\Omega\setminus \Omega'\right)\leq \epsilon/\left(2A+\left\|h\right\|_{\infty}+
2MV\left(\Omega\right)\right),
\]
and $u_n$ is uniformly Cauchy. Therefore, if $m,k$ are such that 
$$\left|u_m-u_k\right|\leq \epsilon / \left(2A\Lambda + \left\|h\right\|_{\infty}\right)$$
in $\Omega'$, we have that
\[
\int_{\Omega}\left|\nabla\left(u_m-u_k\right)\right|^2\, dV
\leq 2\epsilon, 
\]
and thus the sequence is Cauchy in $W^{1,2}_0\left(\Omega\right)$, and it converges,
in $W^{1,2}_0\left(\Omega\right)$, towards $u$. 
Let $\varphi\in C_0^{\infty}\left(\Omega\right)$. Then, we have that
\[
\int_{\Omega}\nabla u_m \nabla \varphi\,dV=
\int_{\Omega}\lambda f\left(u_{m-1}\right)\,dV+\int_{\Omega}h\varphi\,dV,
\]
and hence, if $f$ is continuous, by the dominated convergence theorem, we
can take the limit $m\rightarrow\infty$ on both sides, and since
$\lim_{n\rightarrow\infty} f\left(u_{m-1}\right)=f\left(u\right)$, this shows that
$u=\lim_{n\rightarrow\infty} u_m$ is a weak solution to the BVP (\ref{eq: elliptic_prob_gen0}). In the
same way, it can be shown that whenever $u_m$ is pointwise increasing
and $f$ is lower semicontinuous the same conclusion is reached.
\end{proof}

\subsection{On estimates of the constant $c$ in Proposition \ref{prop:maximump}}
\label{sect:improvc}
If the domain $\Omega$ is contained in $D_{\frac{d}{2}}^{n-k}\times \mathbb{R}^k\subset \mathbb{R}^n$, 
where $D_r^{n-k}$ is an $n-k$ dimensional ball of radius $r$, the estimate on the constant $c$ 
given in Proposition \ref{prop:maximump} can be improved 
upon. Indeed, we can use as $v$ in the proof of the proposition the function
\[
v=\frac{1}{2\left(n-k\right)}\left(\frac{d^2}{4}-x_1^2-\dots-x_{n-k}^2\right),
\]
which gives a constant $$c=\frac{d^2}{8\left(n-k\right)}.$$
We shall use this "improved" value of $c$ in some of the applications below.
\section{Bounded Domains.}
\label{sect:bounded}

We present our first result, whose proof is based on a judicious application
of Banach's iteration technique.
\begin{theorem}
\label{thm:mainthm_lipschitz}
Consider the BVP in the bounded open domain $\Omega$
\begin{equation}
\label{eq: elliptic_prob_gen}
  \left\{
  \begin{array}{l}
  -\Delta u = \lambda\left(x\right) f\left(u\right) + h \quad \mbox{in}\quad \Omega\\
  u = 0 \quad \mbox{on} \quad \partial \Omega,
  \end{array}
  \right.
\end{equation}
with $f$ differentiable and such that $f'$ is bounded 
on any compact subset of $\mathbb{R}$, and $\lambda, h\in L^{\infty}\left(\Omega\right)$. Let 
\[
L= \max_{x\in \left[-c\left\|h\right\|_{\infty}, c\left\|h\right\|_{\infty}\right]} f\left(x\right),
\]
and 
\[
M= M\left(\Lambda\right):=\sup_{x\in \left[-c\left\|h\right\|_{\infty}-
\frac{1}{\theta}\Lambda cL, c\left\|h\right\|_{\infty}+\frac{1}{\theta}\Lambda c L\right]}
f'\left(x\right),
\]
where $c$ is given by
\[
c= \frac{d^2}{8},
\]
$d$ the slab diameter of $\Omega$, and $\Lambda=\left\|\lambda\right\|_{\infty}$.
Then for $\lambda$ such that
\[
c\Lambda M \leq 1-\theta,\quad 0<\theta<1,
\]
there is a solution $u \in W_0^{1,2}\left(\Omega\right)\cap L^{\infty}\left(\Omega\right)$,
and we can estimate
\[
\left\|u\right\|_{\infty}\leq 
c\left\|h\right\|_{\infty}+\frac{1}{\theta}c \Lambda L.
\]
\end{theorem}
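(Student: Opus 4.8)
\medskip

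\noindent\textbf{Proof proposal.} The plan is to show that the natural iteration $u_0, u_1, u_2, \dots$ introduced above is, under the hypothesis $c\Lambda M \le 1-\theta$, a Cauchy sequence in $L^{\infty}\left(\Omega\right)$ all of whose terms take values in the interval $I := \left[-R,R\right]$, where $R := c\left\|h\right\|_{\infty} + \frac{1}{\theta}c\Lambda L$, and then to read off from Lemma \ref{lemma:convergence} that its limit is the desired weak solution. This is just Banach's contraction argument, but it has to be carried out with some care, because the contraction rate $1-\theta$ is only available once we know the iterates are $I$-valued, while confinement to $I$ is only available once the contraction is known.

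First I would record the two base estimates. By Proposition \ref{prop:maximump}, $\left\|u_0\right\|_{\infty}\le c\left\|h\right\|_{\infty}$, so $u_0$ takes values in $\left[-c\left\|h\right\|_{\infty}, c\left\|h\right\|_{\infty}\right]$ and hence $\left|f\left(u_0\right)\right|\le L$ there by the definition of $L$. Since $-\Delta\left(u_1 - u_0\right) = \lambda f\left(u_0\right)$ with zero boundary data, applying Proposition \ref{prop:maximump} once more gives $\left\|u_1 - u_0\right\|_{\infty}\le c\Lambda L$, and then $\left\|u_1\right\|_{\infty}\le c\left\|h\right\|_{\infty}+c\Lambda L$.

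The core of the argument is a simultaneous induction on $n$ establishing
\[
\text{(i)}_n\colon\quad \left\|u_{n+1}-u_n\right\|_{\infty}\le (1-\theta)^n\, c\Lambda L,
\qquad
\text{(ii)}_n\colon\quad \left\|u_n\right\|_{\infty}\le c\left\|h\right\|_{\infty}+c\Lambda L\sum_{j=0}^{n-1}(1-\theta)^j .
\]
The cases $n=0,1$ are the estimates just recorded. For the inductive step, $\text{(ii)}_{n-1}$ and $\text{(ii)}_{n}$ say that $u_{n-1}$ and $u_n$ are $I$-valued; then from $-\Delta\left(u_{n+1}-u_n\right) = \lambda\bigl(f\left(u_n\right)-f\left(u_{n-1}\right)\bigr)$, the mean value theorem, and the fact that $M=\sup_{I} f'$ (the interval in the definition of $M$ is exactly $I$), one gets $\left|f\left(u_n\right)-f\left(u_{n-1}\right)\right|\le M\left|u_n-u_{n-1}\right|$ pointwise, so Proposition \ref{prop:maximump} yields
\[
\left\|u_{n+1}-u_n\right\|_{\infty}\le c\Lambda M\left\|u_n-u_{n-1}\right\|_{\infty}\le (1-\theta)\left\|u_n-u_{n-1}\right\|_{\infty},
\]
which is $\text{(i)}_n$; and $\text{(ii)}_{n+1}$ follows from $\text{(i)}_n$ and $\text{(ii)}_n$ by the triangle inequality. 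Since $\sum_{j\ge 0}(1-\theta)^j = 1/\theta$, $\text{(ii)}_n$ gives the uniform bound $\left\|u_n\right\|_{\infty}\le R$ for every $n$, so every $u_n$ is indeed $I$-valued.

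From $\text{(i)}_n$ the sequence $\{u_n\}$ is Cauchy in $L^{\infty}\left(\Omega\right)$ and hence converges uniformly, in particular a.e., to a limit $u$ with $\left\|u\right\|_{\infty}\le R = c\left\|h\right\|_{\infty}+\frac{1}{\theta}c\Lambda L$; since every $u_n$ is $I$-valued and $f$ is continuous on $I$, the quantities $f\left(u_n\right)$ remain uniformly bounded along the sequence. Lemma \ref{lemma:convergence} then applies and shows that $u\in W_0^{1,2}\left(\Omega\right)\cap L^{\infty}\left(\Omega\right)$ is a weak solution of (\ref{eq: elliptic_prob_gen}), with the stated estimate. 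I expect the only delicate point to be exactly the circularity between "$u_n$ confined to $I$'' and "contraction with ratio $1-\theta$''; the simultaneous induction above is what breaks it, and the hypotheses have been arranged — with $M$ defined on precisely the interval $I$ — so that the induction closes.
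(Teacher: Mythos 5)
Your proposal is correct and follows essentially the same route as the paper: the same iteration, the mean value theorem combined with the maximum principle of Proposition \ref{prop:maximump}, a simultaneous induction coupling the $L^{\infty}$ confinement bound (which keeps the iterates in the interval where $M$ controls $f'$) with the geometric decay of $\left\|u_{n+1}-u_n\right\|_{\infty}$, and an appeal to Lemma \ref{lemma:convergence} to identify the limit as a weak solution. The only difference is cosmetic: you make the base estimate $\left\|u_1-u_0\right\|_{\infty}\le c\Lambda L$ explicit, which the paper uses implicitly in summing the telescoping series.
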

\begin{proof}
Consider the iteration scheme
\[
\left\{
\begin{array}{l}
-\Delta u_0 = h, \quad \mbox{in}\quad \Omega\\
u_0 = 0 \quad \mbox{on} \quad \partial \Omega,
\end{array}
\right.
\]
and then
\[
\left\{
\begin{array}{l}
-\Delta u_{n+1}=\lambda f\left(u_n\right)+h \quad \mbox{in}\quad \Omega\\
u_{n+1} = 0 \quad \mbox{on} \quad \partial \Omega.
\end{array}
\right.
\]
From the maximum principle, we obtain
\[
\left\|u_0\right\|_{\infty}\leq c\left\|h\right\|_{\infty}.
\]
Next, we have that the difference $u_{n+1}-u_n$ satisfies an equation
\begin{eqnarray*}
-\Delta\left(u_{n+1}-u_n\right)&=&\lambda\left(f\left(u_{n}\right)-f\left(u_{n-1}\right)\right)\\
&=&\lambda f'\left(\xi_n\right)\left(u_{n}-u_{n-1}\right),
\end{eqnarray*}
where, for a given $x$, $\xi_n\left(x\right)$ is an intermediate point between 
$u_n\left(x\right)$ and $u_{n-1}\left(x\right)$.
If we have that
\begin{equation}
\label{ineq:ind_hyp}
\left\|u_k\right\|_{\infty}\leq 
c\left\|h\right\|_{\infty}+\left(\frac{1-\left(1-\theta\right)^k}{\theta}\right)c \Lambda L, \quad k=0,1,2,\dots, n
\end{equation}
and 
\[
\left\|u_{k}-u_{k-1}\right\|\leq \left(1-\theta\right)^{k-1}\left\|u_1-u_0\right\|, \quad k=1, \dots, n,
\]
we will show that then
\[
\left\|u_{n+1}\right\|_{\infty}\leq 
c\left\|h\right\|_{\infty}+\left(\frac{1-\left(1-\theta\right)^{n+1}}{\theta}\right)c \Lambda L.
\]
First, notice that (\ref{ineq:ind_hyp}) implies that 
\[
\left\|\xi_n\right\|_{\infty}\leq 
c\left\|h\right\|_{\infty}+\frac{1}{\theta}c \Lambda L,
\]
and hence, by the definition of $M$,
\[
\left\|f'\left(\xi_n\right)\right\|_{\infty} \leq 
M.
\]
Therefore, the maximum principle shows that
\[
\left\|u_{n+1}-u_n\right\|_{\infty}\leq cM\Lambda\left\|u_{n}-u_{n-1}\right\|_{\infty}
\leq \left(1-\theta\right)\left\|u_{n}-u_{n-1}\right\|_{\infty},
\]
from which we can conclude that
\[
\left\|u_{n+1}-u_n\right\|_{\infty}\leq \left(1-\theta\right)^n\left\|u_1-u_0\right\|_{\infty},
\]
and thus
\[
\left\|u_{n+1}\right\|_{\infty}\leq 
\left\|u_0\right\|_{\infty}+\sum_{j=0}^{n}\left\|u_{j+1}-u_j\right\|_{\infty}
\leq c\left\|h\right\|_{\infty}+\left(\frac{1-\left(1-\theta\right)^{n+1}}{\theta}\right)c \Lambda L.
\]
This implies the existence of solutions. Indeed, since
\[
u=u_0+\sum_{j=0}^{\infty}\left(u_{j+1}-u_{j}\right),
\]
this shows that
\[
\left\|u-u_n\right\|_{\infty}\leq \frac{\left(1-\theta\right)^{n+1}}{\theta},
\]
a usual estimate when using Banach's iteration technique.
By Lemma \ref{lemma:convergence}, $u\in W_0^{1,2}\left(\Omega\right)\cap L^{\infty}\left(\Omega\right)$
is a weak solution to the boundary value problem.

\end{proof}

The reader must notice that the bounds in the size of the solution
only depend on the slab diameter of the domain. This observation leads us directly to the 
results to be proved in Section \ref{sect:noncompact}, where this result 
(with some proper modifications) is stated and proved in
the case of an unbounded thin domain.

\subsection{Applications of Theorem \ref{thm:mainthm_lipschitz}}
Our first application concerns
the Liouville-Bratu-Gelfand equation in the unit ball, where $f\left(u\right)=-e^u$, and $\lambda\geq 0$
is a constant,
and $\Omega$ is the unit ball centered at the origin.
In this case, in dimension $n=2$, for $\theta=\frac{1}{2}$ gives that
$\Lambda=\lambda$, $d=2$, $c=\dfrac{d^2}{16}$ 
(using the improved constant, see Section \ref{sect:improvc}),  
$M=\exp\left(\dfrac{2\lambda d^2}{16}\right)$. Then, we can ensure that
the BVP has a solution as long as
\[
\frac{\lambda}{4}\exp\left(\frac{\lambda}{2}\right)\leq \frac{1}{2},
\]
which implies that there is at least one smooth solution (see below) as long as $\left[0, \lambda_{*}\right]$,
with $\lambda_{*}\sim 1.13429\dots$. This should be compare with the well-known result that
for $\lambda\leq \lambda_c=2$, the Liouville-Bratu-Gelfand equation has a solution. As we shall
see in the following section, this will continue to hold as long as $\Omega$ is a thin domain.
Furthermore, we can give an estimate from below for the critical parameter 
in any dimension as follows. 
Recall that the critical parameter for the Liouville-Bratu-Gelfand is
defined as the value $\lambda_c$ such that for $\lambda\in \left[0,\lambda_c\right]$,
the Liouville-Bratu-Gelfand equation has a solution; $\lambda_c$ is known
as the Frank-Kamenetskii parameter.
For $d=2$ (the diameter of the unit ball) and dimension $n$ we have,
by Proposition \ref{thm:mainthm_lipschitz} and the
observation in Section \ref{sect:improvc}, that if
\[
\frac{1}{2n}\lambda \exp\left(\frac{1}{2n\theta}\right)\leq 1-\theta,
\]
then there is a solution. We will optimise in $\theta$. To do this, rewrite
the previous inequality as
\[
\frac{1}{2n\theta}\lambda \exp\left(\frac{1}{2n\theta}\right)\leq \frac{1-\theta}{\theta},
\]
and using Lambert's $W$ function, which is increasing, we have that
\[
\frac{\lambda}{2n\theta}\leq W\left(\frac{1-\theta}{\theta}\right),
\]
that is if 
\[
\lambda \leq \lambda_*=2n \theta W\left(\frac{1-\theta}{\theta}\right),
\]
there is a solution for the BVP (\ref{eq: elliptic_prob_gen}) with $\lambda\geq 0$ a constant,
$f\left(u\right)=e^u$ and $h=0$. Optimising in $\theta$ we obtain, for 
$\theta\sim 0.412962\dots$
\[
\lambda_*=1.162022 n, 
\]
which the reader must compare with the optimal value for the critical parameter
which is $2\left(n-2\right)$ for $n\geq 10$.

Another application, 
closely related to the Liouville-Bratu-Gelfand equation, is 
the conformal deformation of a metric on a domain
in $\mathbb{R}^2$. 
If $M$ is a two-dimensional Riemannian manifold with metric $g$,
and $\tilde{g}=e^{2u}g$ is a conformally related metric, and $K_g$
and $K_{\tilde{g}}$ are their respective Gaussian curvatures, these
are related by the elliptic equation
\[
-\Delta u=e^{2u}K_{\tilde{g}}-K_{g}.
\]
In the context of Theorem 
\ref{thm:mainthm_lipschitz},
$f\left(u\right)=e^{2u}$, $-h$ can be thought as the original 
Gaussian curvature of the
domain (in the case of a flat domain $h=0$), and $\lambda$ can be
interpreted as the Gaussian curvature of the
conformally deformed metric via the factor $e^{2u}$. 
The Dirichlet condition implies that the length of the boundary is kept constant.
Let us assume that the slab diameter of the domain is such that $c=1$, that is $d=2\sqrt{2}$. Then
\[
L=1, \quad M=e^{\frac{2}{\theta}\Lambda},
\]
and hence, we can ensure the existence of a weak solution as long as 
\[
2\Lambda e^{\frac{2}{\theta}\Lambda}\leq 1-\theta,
\]
which translates into 
\[
\Lambda \leq \frac{\theta}{2}W\left[\frac{\left(1-\theta\right)}{\theta}\right],
\]
and optimising in $\theta$, we have that
 as long as $\left\|\lambda\right\|_{\infty}\leq 0.1452\dots$, there
 is a conformal factor for which the new Gaussian curvature
 of the domain is $\lambda$ and the length of the boundary 
 remains unchanged. Notice that
the domain $\Omega$ can be very long in one direction, as long as it is
thin enough in another: this leads to an interesting example to be
explored in Section \ref{sect:conformal_strip}.

\subsection{}
Some variations can be played on the theme introduced in our first theorem. For instance,
we can ask $f$ to be just H\"older continuous. In this case, we have the
following theorem.

\begin{theorem}
\label{thm:mainthm_holder}
Let $f\in C^{\alpha}\left(\mathbb{R}\right)$ be a nonnegative 
nondecreasing function, and $\Omega$ 
a bounded open domain. Then if $h, \lambda\in L^{\infty}\left(\Omega\right)$, $h\geq 0$ and $\lambda\geq 0$,
(\ref{eq: elliptic_prob_gen}) has a solution $u\in W^{1,2}_0\cap L^{\infty}\left(\Omega\right)$.
Furthermore,
\[
\left\|u\right\|_{\infty}\leq c\Lambda \left|f\left(c\left\|h\right\|_{\infty}\right)\right|+
\left(c\Lambda\left[f\right]_{\alpha}\right)^{\frac{1}{1-\alpha}}+c\left\|h\right\|_{\infty},
\]
where $\Lambda=\left\|\lambda\right\|_{\infty}$, $c=d^2/8$, and
\[
\left[f\right]_{\alpha}=
\sup_{x\neq y}\frac{\left|f\left(x\right)-f\left(y\right)\right|}{\left|x-y\right|^{\alpha}}.
\]
\end{theorem}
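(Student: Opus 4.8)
The plan is to run the same iteration scheme as in Theorem \ref{thm:mainthm_lipschitz}, but since $f$ is now only H\"older and not differentiable, the contraction estimate fails; instead I would exploit monotonicity to get a nondecreasing sequence converging pointwise, and then close the argument with Lemma \ref{lemma:convergence}. Concretely, set $u_0$ to be the solution of $-\Delta u_0 = h$, and $u_{n+1}$ the solution of $-\Delta u_{n+1}=\lambda f(u_n)+h$. Since $h\geq 0$, the maximum principle (Proposition \ref{prop:maximump}) gives $u_0\geq 0$, hence $0\leq u_0\leq c\|h\|_\infty$. Because $f\geq 0$ and $\lambda\geq 0$, we have $-\Delta u_1 = \lambda f(u_0)+h\geq h = -\Delta u_0$, so $u_1\geq u_0$ by the maximum principle applied to the difference. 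Inductively, if $u_n\geq u_{n-1}$ then $f(u_n)\geq f(u_{n-1})$ (here we use that $f$ is \emph{nondecreasing}), whence $-\Delta(u_{n+1}-u_n)=\lambda(f(u_n)-f(u_{n-1}))\geq 0$ and the maximum principle yields $u_{n+1}\geq u_n$. So $(u_n)$ is a pointwise nondecreasing sequence.

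Next I would establish a uniform $L^\infty$ bound, which is where the H\"older exponent enters and which I expect to be the main obstacle. Write $R_n=\|u_n\|_\infty$. From $-\Delta u_{n+1}=\lambda f(u_n)+h$ and Proposition \ref{prop:maximump},
\[
R_{n+1}\leq c\Lambda \|f(u_n)\|_\infty + c\|h\|_\infty.
\]
Using $f(t)\leq f(c\|h\|_\infty) + [f]_\alpha\,|t-c\|h\|_\infty|^\alpha \leq f(c\|h\|_\infty) + [f]_\alpha R_n^\alpha$ for $0\leq t\leq R_n$ (valid once $R_n\geq c\|h\|_\infty$, which holds for $n\geq 1$ since the sequence increases), this becomes
\[
R_{n+1}\leq c\Lambda f(c\|h\|_\infty) + c\|h\|_\infty + c\Lambda[f]_\alpha R_n^\alpha.
\]
Call the claimed bound $R_* = c\Lambda f(c\|h\|_\infty) + (c\Lambda[f]_\alpha)^{1/(1-\alpha)} + c\|h\|_\infty$. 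I would check that $R_n\leq R_*$ for all $n$ by induction: the base case $R_0\leq c\|h\|_\infty\leq R_*$ is immediate, and for the step one needs $c\Lambda[f]_\alpha R_*^\alpha \leq (c\Lambda[f]_\alpha)^{1/(1-\alpha)}$, i.e. $R_*^\alpha\leq (c\Lambda[f]_\alpha)^{\alpha/(1-\alpha)}$. This follows because $R_*\geq (c\Lambda[f]_\alpha)^{1/(1-\alpha)}$ would be the wrong direction — so instead I would argue via the fixed-point inequality for the scalar map $g(r)=A + B r^\alpha$ with $A=c\Lambda f(c\|h\|_\infty)+c\|h\|_\infty$ and $B=c\Lambda[f]_\alpha$: since $\alpha<1$, $g$ has a smallest fixed point $r_\infty$, the interval $[0,r_\infty]$ is invariant under $g$, and using $r^\alpha\leq \max\{1,r\}\leq$ a linear-in-$r$ bound one shows $r_\infty\leq A + B^{1/(1-\alpha)}=R_*$ (the elementary inequality $r\leq A+B r^\alpha \Rightarrow r\leq A + B^{1/(1-\alpha)}$ is the crux; it follows by splitting into the cases $B r^\alpha\leq A$ and $Br^\alpha>A$). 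Hence $R_n\leq R_*$ for all $n$.

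With the uniform bound in hand, the nondecreasing sequence $(u_n)$ converges pointwise (monotone convergence) to some $u$ with $0\leq u\leq R_*$, and $f(u_n)$ stays uniformly bounded by $f(R_*)<\infty$ along the sequence since $f$ is continuous, hence bounded on $[0,R_*]$. A nondecreasing $f$ that is H\"older continuous is in particular continuous, so Lemma \ref{lemma:convergence} applies (either branch: $f$ continuous, or $u_n$ nondecreasing with $f$ lower semicontinuous) and gives that $u\in W_0^{1,2}(\Omega)\cap L^\infty(\Omega)$ is a weak solution of (\ref{eq: elliptic_prob_gen}), with $\|u\|_\infty\leq R_*$, which is exactly the claimed estimate. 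The only genuinely delicate point is the scalar inequality bounding the fixed point $r_\infty$ of $g$ by $R_*$; everything else is a direct transcription of the maximum-principle machinery already used above.
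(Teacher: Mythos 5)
Your construction of the monotone sequence $u_{n+1}\geq u_n\geq 0$ and the passage to the limit via Lemma \ref{lemma:convergence} are correct and coincide with the paper's route to existence. The genuine gap is in the quantitative part. The ``elementary inequality'' on which you hang the estimate, namely that $r\leq A+Br^{\alpha}$ forces $r\leq A+B^{1/(1-\alpha)}$, is false: take $\alpha=\tfrac12$, $B=1$, $A=3$, $r=5$; then $r\leq 3+\sqrt{5}\approx 5.24$, yet $r>A+B^{1/(1-\alpha)}=4$ (equivalently, the smallest fixed point of $g(r)=3+\sqrt{r}$ is $\tfrac{7+\sqrt{13}}{2}\approx 5.30>4$). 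The case split you propose ($Br^{\alpha}\leq A$ versus $Br^{\alpha}>A$) only yields $r\leq\max\{2A,(2B)^{1/(1-\alpha)}\}$. That cruder bound is enough to make $R_n=\|u_n\|_{\infty}$ uniformly bounded, so your argument does prove the existence assertion; but it cannot deliver the stated estimate $\|u\|_{\infty}\leq c\Lambda f(c\|h\|_{\infty})+(c\Lambda[f]_{\alpha})^{1/(1-\alpha)}+c\|h\|_{\infty}$, and no refinement of the scalar recursion $R_{n+1}\leq A+BR_n^{\alpha}$ by itself can, because the additive constant $A$ genuinely enlarges the fixed point, as the example shows.

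The paper gets the stated constant by applying the H\"older/maximum-principle estimate to differences of iterates rather than to $\|u_n\|_{\infty}$: $u_{n+1}-u_1$ solves $-\Delta\left(u_{n+1}-u_1\right)=\lambda\left(f(u_n)-f(u_0)\right)$, in which $h$ has cancelled, so the resulting recursion for $\|u_n-u_1\|_{\infty}$ is a pure power inequality with no additive term; iterating it gives $\|u_n-u_1\|_{\infty}\leq\left(c\Lambda[f]_{\alpha}\right)^{1/(1-\alpha)}\|u_2-u_1\|_{\infty}^{\alpha^{n-1}}$, and since $\alpha^{n-1}\to 0$ the last factor tends to $1$, whence $\|u-u_1\|_{\infty}\leq\left(c\Lambda[f]_{\alpha}\right)^{1/(1-\alpha)}$; adding $\|u_1\|_{\infty}\leq c\Lambda f(c\|h\|_{\infty})+c\|h\|_{\infty}$ gives exactly the claimed bound. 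So the missing idea is to measure the iterates from $u_1$, so that the inhomogeneous terms are subtracted off before the H\"older seminorm is invoked, instead of feeding them additively into the power recursion. If you rework it this way, note one further point of care (present in the paper as well): the natural bound for the right-hand side above is $\Lambda[f]_{\alpha}\|u_n-u_0\|_{\infty}^{\alpha}$, with $u_0$ rather than $u_1$, so setting up the difference recursion cleanly is the one delicate step of the proof.
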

\begin{proof}
We iterate as usual:
\[
-\Delta u_0 = h, \quad \mbox{in}\quad \Omega, \quad u_0=0 \quad \mbox{on}\quad \partial \Omega,
\]
and then
\[
-\Delta u_{n+1} =\lambda\left(x\right)f\left(u_n\right) + h, \quad \mbox{in}\quad \Omega, 
\quad u_{n+1}=0 \quad \mbox{on}\quad \partial \Omega.
\]
Using the Maximum Principle is not difficult to show that $u_k\geq 0$, and $u_k\leq u_{k+1}$. 
Furthermore,
\[
\Delta \left(u_{n+1}-u_1\right)\leq c\Lambda\left[f\right]_{\alpha}\left|u_{n}-u_1\right|^{\alpha},
\quad \mbox{in}\quad \Omega, 
\quad u_{n+1}-u_1=0 \quad \mbox{on}\quad \partial \Omega.
\]
From the previous inequality, we can conclude, via the Maximum Principle that
\[
\left\|u_n-u_1\right\|_{\infty}\leq \left(c\Lambda\left[f\right]_{\alpha}\right)^{\frac{1}{1-\alpha}}\left\|u_2-u_1\right\|_{\infty}^{\alpha^{n-1}},
\]
which shows that the sequence $u_n$, $n=0,1,2, \dots$, is bounded.

Then, by Lemma \ref{lemma:convergence},
$
u = \sup_{n}u_n
$
is the sought solution. Observe that from the Maximum Principle follows that
\[
\left\|u_1\right\|_{\infty}\leq c\left|f\left(c\left\|h\right\|_{\infty}\right)\right|
+c\left\|h\right\|_{\infty}.
\]
Finally, by taking $n\rightarrow \infty$, we get an estimate on the $L^{\infty}\left(\Omega\right)$-norm
of $u$:
\[
\left\|u\right\|_{\infty}\leq \Lambda c\left|f\left(c\left\|h\right\|_{\infty}\right)\right|+
\left(c\Lambda\left[f\right]_{\alpha}\right)^{\frac{1}{1-\alpha}}+c\left\|h\right\|_{\infty}.
\]
\end{proof}

\subsection{}
This remark concerns the regularity of solutions obtained from the methods above,
if $\partial \Omega$ is regular enough (for instance of class $C^2$).
In fact, if $\lambda, f$ and $h$ are H\"older continuous then $u$ is a classical
solution. Indeed, the $u_n$'s are continuous in $\overline{\Omega}$
by the $L^p$ estimates for elliptic equations,
the convergence towards $u$ is uniform, and hence $u$ is continuous in $\overline{\Omega}$.
Since $u\in L^{\infty}\left(\Omega\right)$, and $\Omega$ is a bounded
domain, $u\in L^p\left(\Omega\right)$ for any $p>n$. This implies that
for any $\Omega' \subset\subset \Omega$, $u\in W^{1,p}\left(\Omega'\right)$ and hence
$u\in C^{\alpha}\left(\Omega'\right)$, for any $\alpha\in \left(0,1\right)$,
from which it follows by bootstrapping that 
$u\in C^{2,\alpha}\left(\Omega'\right)$.
As this proof is basically local, it also applies to non compact domains.

\subsection{An application of the proof of Theorem
\ref{thm:mainthm_holder}: existence of nontrivial solutions for sublinear nonlinearities and systems}
\label{subsect:sublinear}
The method used in the proof of Theorem \ref{thm:mainthm_holder}
can be used to show that in the unit ball 
$\Omega:=D_1\left(0\right)\subset \mathbb{R}^n$ the BVP
\[
-\Delta u = u^p \quad \mbox{in}\quad \Omega, \quad u=0 \quad \mbox{on}
\quad \partial \Omega,
\]
with $p\leq \frac{1}{2}$ there exists at least a nontrivial solution. This is a
well-known 
(even for $p<1$, see for instance \cite{Ambrosetti}); however, we will give some
 explicit estimates.

Indeed, first consider the ODE
\begin{equation}
\label{eq:radial_sub}
-\frac{d}{dr}\left(r^{n-1}\frac{dw}{dr}\right)=\epsilon r^{n-1}\left(1-r^2\right),
\quad w\left(1\right)=0.
\end{equation}
We can solve explicitly and obtain
\[
w\left(r\right)=\frac{\epsilon\left(n+4\right)}{4n\left(n+2\right)}
\left[\frac{n}{n+4}r^4-\frac{2\left(n+2\right)}{n+4}r^2+1\right]
\geq 
\frac{\epsilon\left(n+4\right)}{4n\left(n+2\right)}\left(1-r^2\right)^2.
\]
By using 
$r=\left(\sum_{j=1}^n x_j^2\right)^{\frac{1}{2}}$, 
it is elementary to show that for a given $0<p\leq \frac{1}{2}$, for $\epsilon>0$ small enough
\[
-\Delta w = \epsilon\left(1-r^2\right) \leq w\left(r\right)^p.
\]
Indeed, we can use any 
\[
0<\epsilon\leq \left(\dfrac{n+4}{4n\left(n+2\right)}\right)^{\frac{p}{1-p}}.
\]
Thus, instead of using the function zero, we
define $u_0=w$, with $\epsilon>0$ properly chosen, 
and define the usual iteration,
\[
-\Delta u_{k+1}=u_k^p.
\]
The maximum principle shows that $u_{k+1}\geq u_k$; also, as we showed in the 
proof of Theorem \ref{thm:mainthm_holder},
the sequence remains uniformly bounded above, and hence it converges towards
a nonzero solution of the BVP. Furthermore, we have the following estimate
\[
\left\|u\right\|_{\infty}\leq \frac{1}{2n}
\left(\frac{1}{2n}\left(\frac{3}{4}\frac{n+4}{4n\left(n+2\right)}\right)^{\frac{p}{1-p}}\right)^{p}
+\left(\frac{1}{2n}\right)^{\frac{1}{1-p}}+\frac{1}{2n}\frac{3}{4}\left(\frac{n+4}{4n\left(n+2\right)}\right)^{\frac{p}{1-p}},
\]
as the role of $h$ is now played by $\epsilon\left(1-r^2\right)$ and $\left[u^p\right]_p=1$.
Here, we are using the "improved" value $c=\dfrac{2^2}{8 n}=\dfrac{1}{2n}$.
Thus,
\[
\left\|u\right\|_{\infty}=O\left(\frac{1}{n^{\frac{1}{1-p}}}\right).
\]
The reader must observe
that we also have the estimate from below
\[
u\geq w\left(r\right)\geq
\left(\dfrac{n+4}{4n\left(n+2\right)}\right)^{\frac{1}{1-p}}\left(1-r^2\right)^{2},
\]
and thus away from the boundary of the ball $u\sim 1/n^{\frac{1}{1-p}}$, so we have
thus gotten a somewhat sharp estimate.

The previous reasoning can be extended to general bounded domains (we sketch how
this can be done in Section \ref{sect:systems}). 

To show how systems can be treated by our methods, we study the following system
(the well known Lane-Emden system \cite{Chen, deFigueiredo})
\[
-\Delta u = v^{p}, \quad -\Delta v = u^q
\quad \mbox{in}\quad \Omega
, \quad u=v=0 \quad \mbox{on}\quad
\partial \Omega,
\]
with $p,q\leq 1/2$, $\Omega$ the unit ball. Indeed, we choose $\epsilon >0$ such that the solution 
to 
(\ref{eq:radial_sub})
satisfies $w^q \geq \epsilon^q\left(1-r^2\right)^q$ and 
$w^p \geq \epsilon^p\left(1-r^2\right)^p$. Let $u_0=v_0=w$, and consider the recurrence
\[
-\Delta u_{n+1} = v_n^{p}, \quad -\Delta v_{n+1} = u_n^q
\quad \mbox{in}\quad \Omega
, \quad u_{n+1}=v_{n+1}=0 \quad \mbox{on}\quad
\partial \Omega.
\]
Notice that, from the maximum principle, we can deduce an estimate
\[
\left\|u_{n+1}-u_1\right\|_{\infty}+\left\|v_{n+1}-v_1\right\|_{\infty}
\leq C\left(2+\left\|u_{n}-u_0\right\|_{\infty}+\left\|v_{n}-v_0\right\|_{\infty}\right)^s,
\]
where $s=\max\left\{p,q\right\}\leq 1/2$. The previous estimate implies that the sequence $\left(u_n,v_n\right)$
is bounded, and being componentwise nondecreasing (which can be shown by induction), it is not difficult to show that 
they converge towards a nontrivial weak solution of the system. 
This example can be extended to bounded domains; as the constant $C$ depends only
how thin the domain is, it can also be extended to a certain class of unbounded
 domains
(see Section \ref{sect:systems}).

\subsection{Lower semicontinuous nonlinearities.} 

As in \cite{Carl}, our methods allow for the nonlinearity to be discontinuous. However,
we can allow growth that goes beyond critical; of course, in \cite{Carl}, the function
$h$ is allowed to be in spaces that contain $L^{\infty}\left(\Omega\right)$, at least
when $\Omega$ is bounded. As an example of this, we prove the following. 
\begin{theorem}
\label{thm:mainthm_discontinuous}
Let $f:\left[0,\infty\right)\longrightarrow \left[0,\infty\right)$ be a  
nondecreasing lower semicontinuous function such that
\[
f\left(s\right)\leq Ks^p,
\]
$\Omega$ an open and bounded domain, and let $h, \lambda\in L^{\infty}\left(\Omega\right)$, $h\geq 0$, $\lambda\geq 0$.
\begin{itemize}

\item
If $p<1$, there is always a solution $u\in W^{1,2}_0\left(\Omega\right)\cap L^{\infty}\left(\Omega\right)$
to BVP (\ref{eq: elliptic_prob_gen}).

\item
If $p\geq 1$,
then for $h$ and $K$ small enough,
(\ref{eq: elliptic_prob_gen}) has a solution $u\in W^{1,2}_0\left(\Omega\right)\cap L^{\infty}\left(\Omega\right)$.
Indeed, there is a solution $u$ as long as
\[
cK\Lambda\leq \frac{1}{2},\quad
\left\|h\right\|_{\infty}\leq  \Lambda K, \quad \mbox{where}
\quad \Lambda=\left\|\lambda\right\|_{\infty},
\]
and $c=\dfrac{d^2}{8}$, and $d$ is the slab diameter of $\Omega$,
and we have an estimate $\left\|u\right\|_{\infty}\leq 1$.

\end{itemize}
\end{theorem}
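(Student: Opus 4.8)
The plan is to run the monotone iteration introduced before Lemma~\ref{lemma:convergence} and to close the argument with that lemma. Let $u_0\in W^{1,2}_0(\Omega)\cap L^\infty(\Omega)$ solve $-\Delta u_0=h$ with zero boundary data, and inductively let $u_{n+1}$ solve $-\Delta u_{n+1}=\lambda f(u_n)+h$; these exist by Proposition~\ref{prop:maximump}. Because $h\ge 0$, the maximum principle gives $u_0\ge 0$; because $\lambda f(u_0)+h\ge h$ and the boundary data agree, the comparison principle gives $u_1\ge u_0$; and since $f$ is nondecreasing, $-\Delta(u_{n+1}-u_n)=\lambda\left(f(u_n)-f(u_{n-1})\right)\ge 0$, so by induction $0\le u_n\le u_{n+1}$ for all $n$. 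Thus $\{u_n\}$ is pointwise nondecreasing and it converges a.e.\ to some $u$ as soon as a uniform $L^\infty$ bound is available.

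To produce such a bound, use Proposition~\ref{prop:maximump} together with the growth hypothesis $f(s)\le Ks^p$:
\[
\|u_{n+1}\|_\infty\le c\left(\Lambda\|f(u_n)\|_\infty+\|h\|_\infty\right)\le c\Lambda K\,\|u_n\|_\infty^{\,p}+c\|h\|_\infty .
\]
Hence, if $M>0$ satisfies $\|u_0\|_\infty\le M$ and $c\Lambda K M^{p}+c\|h\|_\infty\le M$, then $\|u_n\|_\infty\le M$ for every $n$, and in particular $\|f(u_n)\|_\infty\le KM^{p}$ stays bounded along the sequence.

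For $p<1$ the map $M\mapsto c\Lambda K M^{p}+c\|h\|_\infty$ grows sublinearly, so the inequality $c\Lambda K M^{p}+c\|h\|_\infty\le M$ holds for all large $M$; increasing $M$ if necessary so that also $M\ge c\|h\|_\infty\ge\|u_0\|_\infty$, the induction closes and no smallness is needed. For $p\ge 1$ take $M=1$: then $\|u_0\|_\infty\le c\|h\|_\infty\le c\Lambda K\le\tfrac12\le 1$, and if $\|u_n\|_\infty\le 1$ then $\|u_n\|_\infty^{\,p}\le 1$, so
\[
\|u_{n+1}\|_\infty\le c\Lambda K+c\|h\|_\infty\le\tfrac12+\tfrac12=1,
\]
using $c\Lambda K\le\tfrac12$ and $c\|h\|_\infty\le c\Lambda K\le\tfrac12$; thus $\|u_n\|_\infty\le 1$ for all $n$ and the limit obeys $\|u\|_\infty\le 1$.

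In both cases $\{u_n\}$ is nondecreasing, converges a.e.\ to $u\in L^\infty(\Omega)$, and $f(u_n)$ is uniformly bounded, so, $f$ being lower semicontinuous, Lemma~\ref{lemma:convergence} yields that $u\in W^{1,2}_0(\Omega)\cap L^\infty(\Omega)$ is a weak solution of (\ref{eq: elliptic_prob_gen}). The only genuinely delicate step is passing to the limit when $f$ is merely lower semicontinuous, which is precisely the content of the monotone case of Lemma~\ref{lemma:convergence}; everything else is the bookkeeping of the uniform bound, which becomes immediate once the correct $M$ is identified (arbitrarily large for $p<1$, equal to $1$ for $p\ge 1$).
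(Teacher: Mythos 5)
Your proposal is correct and follows essentially the same route as the paper: the same iteration, the same recursive bound $\|u_{n+1}\|_\infty\le c\Lambda K\|u_n\|_\infty^p+c\|h\|_\infty$ analysed in the two cases, and the monotone, lower-semicontinuous case of Lemma~\ref{lemma:convergence} to pass to the limit. The only cosmetic differences are that you spell out the monotonicity induction (which the paper only asserts) and close the $p\ge 1$ induction with the bound $M=1$ instead of the paper's equivalent bound $\left(\|h\|_\infty/\Lambda K\right)^{1/p}\le 1$.
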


We must remark that if $\Omega \in D_{\frac{d}{2}}^{n-k}\times \mathbb{R}^k\subset \mathbb{R}^n$, then the constant $c$ can be improved to $d^2/8\left(n-k\right)$. Again,
notice also that there is no requirement on the
exponent $p$, i.e., in particular growth can be supercritical, a case not covered in \cite{Carl}.
\begin{proof}
As usual, define
\[
-\Delta u_0= h \quad \mbox{in}\quad \Omega, \quad u=0 \quad\mbox{on}\quad \partial \Omega,
\]
and then,
\[
-\Delta u_{n+1}= \lambda f\left(u_n\right)+ h \quad \mbox{in}\quad \Omega, 
\quad u_{n+1}=0 \quad\mbox{on}\quad \partial \Omega.
\]
It follows from the maximum principle and the iteration that
\[
\left\|u_{n+1}\right\|_{\infty}\leq c\Lambda K\left\|u_n\right\|^p_{\infty}+c\left\|h\right\|_{\infty}.
\]
If $p<1$, since there is an $R$ such that whenever $r\geq R$, $c\Lambda K r^{p}+c\left\|h\right\|_{\infty}<r$,
this implies that the sequence $u_n$ is uniformly bounded, and the
bound depends only on $c, K$ and $\Lambda$. As the sequence $u_n$ is increasing
and $f$ is
lower semicontinuous,
by Lemma \ref{lemma:convergence}, $u=\sup u_n$ is the sought solution.
This shows the first part of the theorem.

\bigskip

We now prove the second part of the theorem, that is, we let $p\geq 1$. From the maximum principle,
\[
\left\|u_0\right\|_{\infty}\leq c\left\|h\right\|_{\infty}, \quad
c=\frac{d^2}{8},
\]
and
\[
\left\|u_{n+1}\right\|_{\infty}\leq c\Lambda K\left\|u_n\right\|_{\infty}^p+c\left\|h\right\|_{\infty}.
\]
Let us show that the previous inequality, under the hypothesis of the theorem, implies that
the sequence $u_n$ is bounded.

First notice that as $c\Lambda K\leq 1/2<1$, we have that
\[
c\Lambda K\left\|h\right\|_{\infty}\leq \left\|h\right\|_{\infty},
\]
and hence 
\[
\left\|u_0\right\|_{\infty}\leq c\left\|h\right\|_{\infty}\leq \frac{\left\|h\right\|_{\infty}}{\Lambda K}
\leq \left(\frac{\left\|h\right\|_{\infty}}{\Lambda K}\right)^{\frac{1}{p}},
\]
where we have used that $\left\|h\right\|_{\infty}\leq \Lambda K$.
We proceed by induction:
\begin{eqnarray*}
\left\|u_{n+1}\right\|_{\infty}&\leq& c\Lambda K\left\|u_n\right\|_{\infty}^p + c\left\|h\right\|_{\infty}\\
&\leq& c\Lambda K\left(\left\|u_n\right\|_{\infty}^p+\frac{\left\|h\right\|_{\infty}}{\Lambda K}\right)\\
&\leq& c\Lambda K\left(\frac{\left\|h\right\|_{\infty}}{\Lambda K}+\frac{\left\|h\right\|_{\infty}}{\Lambda K}\right)\\
&\leq&\frac{1}{2}\left(2\frac{\left\|h\right\|_{\infty}}{\Lambda K}\right)
\leq \frac{\left\|h\right\|_{\infty}}{\Lambda K}
\leq \left(\frac{\left\|h\right\|_{\infty}}{\Lambda K}\right)^{\frac{1}{p}},
\end{eqnarray*}
and hence the sequence $u_n$ is uniformly bounded; as it is also increasing, the second part of the
theorem follows.

\end{proof}

\subsection{Remark.}
\label{sect:remarksublinear}
The attentive reader must have noticed that in the previous theorem, if instead of 
assuming $f\left(s\right)\leq Ks^p$ with $p<1$ we assume that $f$ is sublinear,
that is
\[
\lim_{s\rightarrow \infty} \frac{f\left(s\right)}{s}=0,
\]
besides all the other hypotheses,
then we would still have existence of solutions for any $\lambda, h \in L^{\infty}\left(\Omega\right)$.
Indeed, the required boundedness of the sequence of $u_n$'s obtained from the iteration 
would then follow from the fact that
\[
\left\|u_{n+1}\right\|_{\infty}\leq c\Lambda f\left(\left\|u_n\right\|_{\infty}\right)+c\left\|h\right\|_{\infty},
\]
and then that for all $r>0$ large enough, as $f$ is sublinear, the inequality 
\[
c\Lambda f\left(r\right)+c\left\|h\right\|_{\infty}\leq r
\]
holds.
We leave the details to the interested reader.

\subsection{An application of Theorem \ref{thm:mainthm_discontinuous}.}
\label{sect:application_discontinuous}
As a concrete application of the previous theorem, let 
$\Omega=D_1\left(0\right) \in \mathbb{R}^n$ be the unit ball, and
consider
\[
-\Delta u = f\left(u\right)^{p}+1 \quad \mbox{in}\quad \Omega,\quad
u=0\quad \mbox{on}\quad \partial \Omega,
\]
where $f$ is the function
\[
f\left(x\right)= 
l, \quad \mbox{if}\quad \quad \frac{l}{10}<x\leq \frac{l+1}{10}, \quad l\in \mathbb{Z}_+,
\]
and $f\left(x\right)=0$ if $x\in\left[0,\frac{1}{10}\right]$. Notice that $f\left(x\right)\leq 10^p x^p$, so
Theorem \ref{thm:mainthm_discontinuous}, using the improved value
of $c$ from Section \ref{sect:improvc}, shows that if 
$
\dfrac{10^p}{2n}\leq \dfrac{1}{2}
$,
then there is a weak solution to the BVP (\ref{eq: elliptic_prob_gen}),
that is, whenever $0<p\leq \log_{10} n$.
This solution is $C^{\alpha}\left(\Omega\right)$
for any $\alpha\in\left(0,1\right)$. The reader must compare this with the 
results obtained by Carl and Heikkil\"a in \cite{Carl}, it is required that $p\leq \dfrac{n+2}{n-2}$ if $n\geq 3$:
the growth allowed in the case above is faster for dimensions $n\geq 18$.

\section{Thin unbounded domains (domains bounded in one direction).}
\label{sect:noncompact}

As it has been hinted above, the results presented in the previous section can be generalised to unbounded domains.
To state and prove our generalisations, we shall use the following definition of weak solutions.
\begin{definition}
\label{def:weak_solution}
$u$ is a weak solution to
\[
-\Delta u=\lambda\left(x\right)f\left(u\right)+h\left(x\right)\quad
\mbox{in}\quad \Omega, \quad u=0 \quad \mbox{on}\quad \partial\Omega,
\]
if there exists an increasing sequence of nested bounded open sets 
$\Omega_j\subset \Omega_{j+1}\subset \Omega$ such that
\[
\Omega = \cup_j \Omega_j,
\]
and a sequence of functions $u_j\in W_0^{1,2}\left(\Omega_j\right)$
such that for all open sets $U$ such that $\overline{U}\subseteq \Omega$, $u_j\rightarrow u$
in $W^{1,2}\left(U\right)$, and for all $\varphi\in C_0^{\infty}\left(\Omega\right)$
\[
\int_{\Omega}\nabla u\nabla\varphi\,dV=
\int_{\Omega}\lambda\left(x\right)f\left(u\right)\varphi+ h\varphi\,dV.
\]
\end{definition}

\bigskip
A $d$-slab is the region contained between two parallel hyperplanes at distance $d$.
Recall the notation
\[
\Pi_d=\left\{\left(x_1, \dots, x_{n-1}, x_n\right)\in \mathbb{R}^n\,:\, -\frac{d}{2}<x_{n}
<\frac{d}{2}\right\}.
\]
A $d$-thin domain is a domain contained in a $d$-slab. Our next theorem,
which is a version of Theorem \ref{thm:mainthm_discontinuous} for slabs,
can be stated for more general $d$-thin domains, however we have 
stated and prove it in the case of slabs to minimize technical difficulties.

\begin{theorem}
\label{thm:discontinuous_noncompact}
Let $f:\left[0,\infty\right)\longrightarrow \left[0,\infty\right)$ be a  
nondecreasing lower semicontinuous function such that
\[
f\left(s\right)\leq Ks^p,
\]
and let $h, \lambda\in L^{\infty}\left(\Pi_d\right)$, $h\leq 0$, $\lambda\geq 0$.
\begin{itemize}

\item
If $p<1$, there is always a weak solution $u\in L^{\infty}\left(\Pi_d\right)$ 
to BVP (\ref{eq: elliptic_prob_gen}) in the sense 
of Definition \ref{def:weak_solution}.

\item
If $p\geq 1$,
then for $h$ and $K$ small enough,
(\ref{eq: elliptic_prob_gen}) has a solution $u\in L^{\infty}\left(\Pi_d\right)$
in the sense 
of Definition \ref{def:weak_solution}.
Indeed, there is a solution as long as
\[
cK\Lambda\leq \frac{1}{2},\quad
\left\|h\right\|_{\infty}\leq  \Lambda K,
\]
where $c=\dfrac{d^2}{8}$. 
\end{itemize}
\end{theorem}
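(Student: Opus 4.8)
The plan is to reduce to the bounded case, Theorem~\ref{thm:mainthm_discontinuous}, by exhausting the slab and passing to the limit. First I would fix an increasing sequence of bounded $C^{1,1}$ domains $\Omega_j$ with $\overline{\Omega_j}\subset\Omega_{j+1}$, $\Omega_j\subset\Pi_d$ and $\bigcup_j\Omega_j=\Pi_d$ — for instance $\Pi_d\cap B_j\left(0\right)$ with the edges along $\partial\Pi_d$ rounded off. The essential observation is that the slab diameter of every $\Omega_j$ is at most $d$, so the maximum principle constant of Proposition~\ref{prop:maximump} for each $\Omega_j$ is bounded by $c=d^2/8$, uniformly in $j$. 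Consequently the structural hypotheses of Theorem~\ref{thm:mainthm_discontinuous} — $cK\Lambda\leq 1/2$ and $\left\|h\right\|_{\infty}\leq\Lambda K$ when $p\geq 1$, and the existence of $R$ with $c\Lambda Kr^{p}+c\left\|h\right\|_{\infty}<r$ for $r\geq R$ when $p<1$ — hold simultaneously on all the $\Omega_j$ with one and the same $c$.

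Applying Theorem~\ref{thm:mainthm_discontinuous} on $\Omega_j$ produces a solution $u^{(j)}\in W_0^{1,2}\left(\Omega_j\right)\cap L^{\infty}\left(\Omega_j\right)$, obtained as $u^{(j)}=\sup_n u_n^{(j)}$ from the usual iteration, and — this is the point — with an $L^{\infty}$ bound ($\left\|u^{(j)}\right\|_{\infty}\leq 1$ if $p\geq 1$, $\leq R$ if $p<1$) independent of $j$. Extending each iterate by zero, an induction on the iteration index $n$, using that $f$ is nondecreasing, $\lambda\geq 0$, and the sign hypothesis on $h$ (which keeps all iterates in the domain of $f$ and nonnegative, so comparison applies on $\partial\Omega_j$), gives $u_n^{(j)}\leq u_n^{(j+1)}$ on $\Omega_j$ for every $n$, hence $u^{(j)}\leq u^{(j+1)}$ on $\Omega_j$. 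Thus on each fixed $\Omega_i$ the tail $\left(u^{(j)}\right)_{j\geq i}$ is nondecreasing and uniformly bounded, so $u:=\lim_j u^{(j)}$ exists a.e. on $\Pi_d$ and lies in $L^{\infty}\left(\Pi_d\right)$ with the same bound. (One may avoid monotonicity in $j$ by instead invoking a Caccioppoli estimate to get a uniform $W^{1,2}_{\mathrm{loc}}$ bound, plus Rellich's theorem, at the cost of a subsequence.)

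It remains to check that $u$, together with $u_j:=u^{(j)}$, is a weak solution in the sense of Definition~\ref{def:weak_solution}. Fix an open $U$ with $\overline U\subseteq\Pi_d$, choose $i$ with $\overline U\subset\Omega_i$, and a cutoff $\psi\in C_0^{\infty}\left(\Omega_{i+1}\right)$ with $\psi\equiv 1$ on $U$. For $j,k\geq i+1$, testing the equations for $u^{(j)}$ and $u^{(k)}$ against $\psi^2\left(u^{(j)}-u^{(k)}\right)\in W_0^{1,2}\left(\Omega_{\min\{j,k\}}\right)$ and absorbing the gradient cross term yields
\begin{align*}
\int_{\Omega_{i+1}}\psi^2\left|\nabla\left(u^{(j)}-u^{(k)}\right)\right|^2\,dV
&\leq C\int_{\Omega_{i+1}}\left|\nabla\psi\right|^2\left(u^{(j)}-u^{(k)}\right)^2\,dV\\
&\quad+C\int_{\Omega_{i+1}}\psi^2\left|f\left(u^{(j)}\right)-f\left(u^{(k)}\right)\right|\left|u^{(j)}-u^{(k)}\right|\,dV.
\end{align*}
Since $f\left(u^{(j)}\right)\leq K\left\|u^{(j)}\right\|_{\infty}^{p}$ is uniformly bounded, and since $f$ is nondecreasing and lower semicontinuous while $u^{(j)}\uparrow u$ a.e. (so $f\left(u^{(j)}\right)\to f\left(u\right)$ a.e.), dominated convergence drives the right-hand side to $0$ as $j,k\to\infty$; hence $u^{(j)}\to u$ in $W^{1,2}\left(U\right)$. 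Finally, for $\varphi\in C_0^{\infty}\left(\Pi_d\right)$ with $\operatorname{supp}\varphi\subset\Omega_i$, letting $j\to\infty$ in $\int_{\Pi_d}\nabla u^{(j)}\nabla\varphi\,dV=\int_{\Pi_d}\left(\lambda f\left(u^{(j)}\right)+h\right)\varphi\,dV$ — the left side by the local $W^{1,2}$ convergence, the right side by dominated convergence — gives the weak identity.

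The argument is soft; the only delicate point is the passage $f\left(u^{(j)}\right)\to f\left(u\right)$ in the genuinely discontinuous case, which is exactly where lower semicontinuity and the monotonicity of both $f$ and the sequence enter — the same mechanism as in Lemma~\ref{lemma:convergence}, now localized. I would also record in passing that, as in the bounded setting, the constant $c$ improves to $d^2/8\left(n-k\right)$ when $\Pi_d$ is replaced by $D_{d/2}^{n-k}\times\mathbb{R}^k$, since the barrier of Section~\ref{sect:improvc} is available verbatim on each exhausting domain.
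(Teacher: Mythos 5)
Your proposal is correct and follows essentially the same route as the paper: exhaust $\Pi_d$ by bounded subdomains sharing the same slab-diameter constant $c=d^2/8$, apply the bounded-domain result (Theorem \ref{thm:mainthm_discontinuous}) to get uniformly bounded solutions, prove monotonicity in the exhaustion parameter by comparing the iterates via the maximum principle, and pass to the monotone limit. The only difference is that you additionally verify the local $W^{1,2}$ convergence and the weak identity of Definition \ref{def:weak_solution} via a cutoff (Caccioppoli) argument and the lower-semicontinuity/monotonicity mechanism of Lemma \ref{lemma:convergence}, a step the paper's proof leaves implicit.
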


\begin{proof}
Consider the domain 
\[
\Omega_m = \Pi_d \cap \left[-m-2, m+2\right]^n.
\]
We impose Dirichlet boundary conditions in $\Omega_{m}$
and solve (\ref{eq: elliptic_prob_gen}) in $\Omega_{m}$
using the iteration procedure;
call each of these solutions $u_m$, which belong
to $W_0^{1,2}\left(\Omega_m\right)\cap L^{\infty}\left(\Omega_m\right)$. We have that 
in the case of sublinear growth of $f$ 
the solutions are
uniformly bounded, that the bound is independent of the domain,
as all lie within the same slab, and
in the case of linear or superlinear growth, under the hypotheses
of the theorem, the solutions produced via iteration satisfy
\[
\left\|u_m\right\|_{\infty}\leq 1, \quad
\mbox{for all} \quad m,
\]
and also that $u_{m+1}\geq u_{m}$ on $\Omega_m$. Let us prove this last claim.
Firstly, given $\Omega_m$, let $u_{m,k}$ be the sequence of approximations
produced to solve the Dirichlet problem: we show that
$u_{m+1,0}\geq u_{m,0}$. Indeed, notice that as $\partial \Omega_m \in \overline{\Omega}_{m+1}$,
then $u_{m+1,0}\geq 0$ on $\partial \Omega_m$. In $\Omega_m$ we have
\begin{eqnarray*}
\Delta\left(u_{m+1,0}-u_{m,0}\right)&=& 0,
\end{eqnarray*}
and thus $u_{m+1,0}\geq u_{m,0}$ by the maximum principle.
Assuming that $u_{m+1,k}\geq u_{m,k}$ we have that
\begin{eqnarray*}
\Delta\left(u_{m+1,k+1}-u_{m,k+1}\right)&=&
-\lambda\left(x\right)\left[f\left(u_{m+1,k}\right)-f\left(u_{m,k}\right)\right]\leq 0,
\end{eqnarray*}
and the classical maximum principle gives the claim.
Therefore, $u_{m+1}\geq u_m$,
and there is a limit $u=\sup u_m$,
which is a solution to the equation in the whole domain.

\end{proof}

Remark \ref{sect:remarksublinear} also holds for the previous theorem.

\bigskip
We also have the analogue of Theorem \ref{thm:mainthm_lipschitz}. We present its
statement and proof, followed by an interesting application to geometry.
\begin{theorem}
\label{thm:lipschitz_noncompact}
Consider the BVP (\ref{eq: elliptic_prob_gen}),
with $f$ differentiable, nonnegative, nondecreasing, and such that $f'$ is bounded 
on any compact subset of $\mathbb{R}$, and $\lambda\geq 0, h\geq 0$ both in
$L^{\infty}\left(\Pi_d\right)$. Let $0<\theta<1$,  
\[
L= \max_{x\in \left[-c\left\|h\right\|_{\infty}, c\left\|h\right\|_{\infty}\right]} f\left(x\right),
\]
and 
\[
M= M\left(\lambda\right):=\sup_{x\in \left[-c\left\|h\right\|_{\infty}-\frac{1}{\theta}\Lambda cL,
c\left\|h\right\|_{\infty}+\frac{1}{\theta}\Lambda c L\right]}
f'\left(x\right),
\]
where $c=\dfrac{d^2}{8}$, and $\Lambda=\left\|\lambda\right\|_{\infty}$.
Then for $\lambda$ such that
\[
c\Lambda M \leq 1-\theta,
\]
there is a solution $u \in L^{\infty}\left(\Pi_d\right)$
in the sense 
of Definition \ref{def:weak_solution},
and we can estimate
\[
\left\|u\right\|_{\infty}\leq 
c\left\|h\right\|_{\infty}+\frac{1}{\theta}c \Lambda L.
\]
\end{theorem}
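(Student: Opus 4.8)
The plan is to mimic the proof of Theorem \ref{thm:discontinuous_noncompact}, replacing the role played there by Theorem \ref{thm:mainthm_discontinuous} with the role played here by Theorem \ref{thm:mainthm_lipschitz}. First I would exhaust $\Pi_d$ by the bounded domains $\Omega_m=\Pi_d\cap[-m-2,m+2]^n$, each of which is $C^{1,1}$ and has slab diameter $\leq d$, so that $c=d^2/8$ works uniformly. On each $\Omega_m$ the hypothesis $c\Lambda M\leq 1-\theta$ together with Theorem \ref{thm:mainthm_lipschitz} produces, via the usual iteration $-\Delta u_{m,0}=h$, $-\Delta u_{m,k+1}=\lambda f(u_{m,k})+h$, a solution $u_m\in W_0^{1,2}(\Omega_m)\cap L^\infty(\Omega_m)$ satisfying the uniform bound $\|u_m\|_\infty\leq c\|h\|_\infty+\frac1\theta c\Lambda L$, independent of $m$ because the constant $c$ depends only on the slab diameter. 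I extend each $u_m$ by zero to all of $\Pi_d$.

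Next I would establish monotonicity $u_{m+1}\geq u_m$ on $\Omega_m$, exactly as in the proof of Theorem \ref{thm:discontinuous_noncompact}: at the base step $\Delta(u_{m+1,0}-u_{m,0})=0$ in $\Omega_m$ with $u_{m+1,0}\geq 0=u_{m,0}$ on $\partial\Omega_m$, so the maximum principle gives $u_{m+1,0}\geq u_{m,0}$; and inductively, if $u_{m+1,k}\geq u_{m,k}$ then, using that $f$ is nondecreasing and $\lambda\geq 0$, $\Delta(u_{m+1,k+1}-u_{m,k+1})=-\lambda[f(u_{m+1,k})-f(u_{m,k})]\leq 0$ in $\Omega_m$ with nonnegative boundary values, whence $u_{m+1,k+1}\geq u_{m,k+1}$. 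Passing $k\to\infty$ gives $u_{m+1}\geq u_m$ on $\Omega_m$. Here I use that the sign conditions $f\geq 0$, $h\geq 0$ force all the $u_{m,k}\geq 0$, which is also needed so that the comparison on $\partial\Omega_m$ goes the right way. With the pointwise monotone and uniformly bounded family in hand, I set $u=\sup_m u_m=\lim_m u_m$, which exists pointwise and lies in $L^\infty(\Pi_d)$ with the claimed bound.

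It then remains to check that $u$ is a weak solution in the sense of Definition \ref{def:weak_solution}. I would take the nested exhaustion $\{\Omega_m\}$ itself as the sequence of sets in that definition, with $u_m\in W_0^{1,2}(\Omega_m)$ as the approximating functions. Local $W^{1,2}$ convergence on any $U$ with $\overline U\subseteq\Pi_d$ follows the Egorov-plus-Caccioppoli argument of Lemma \ref{lemma:convergence}: for $m>k$ large, $\int_{\Omega_k}|\nabla(u_m-u_k)|^2\leq (2A\Lambda+\|h\|_\infty)\int_{\Omega_k}|u_m-u_k|$ where $A$ bounds $f$ on $[0,\|u\|_\infty]$ (finite since $f$ is continuous and the $u_m$ are uniformly bounded), and the right side tends to $0$ by dominated convergence since $u_m\to u$ pointwise and $\Omega_k$ has finite volume; monotonicity makes the convergence clean. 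Finally, for $\varphi\in C_0^\infty(\Pi_d)$, its support sits in some $\Omega_k$, so $\int\nabla u_m\nabla\varphi=\int\lambda f(u_{m-1})\varphi+\int h\varphi$ for all large $m$, and letting $m\to\infty$, using the $W^{1,2}$ convergence on $\mathrm{supp}\,\varphi$ on the left and dominated convergence together with continuity of $f$ on the right, yields the weak formulation. The main obstacle is the bookkeeping in the local convergence step — making sure the $W^{1,2}$-Cauchy estimate is applied on a fixed $\Omega_k\supseteq\overline U$ rather than on the moving domains $\Omega_m$, and that the uniform $L^\infty$ bound (hence the uniform bound on $f$ along the sequence) is genuinely independent of $m$; once that is set up, everything reduces to the already-proven Theorem \ref{thm:mainthm_lipschitz} and Lemma \ref{lemma:convergence}.
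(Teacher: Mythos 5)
Your proposal follows essentially the same route as the paper: exhaust $\Pi_d$ by the bounded domains $\Omega_m$, invoke Theorem \ref{thm:mainthm_lipschitz} (whose constants depend only on the slab diameter, hence are uniform in $m$) to get the uniformly bounded solutions $u_m$, prove $u_{m+1}\geq u_m$ by comparing the iterates level by level with the maximum principle exactly as in the proof of Theorem \ref{thm:discontinuous_noncompact}, and pass to the monotone limit $u=\sup_m u_m$. The one step to tighten is your local $W^{1,2}$ convergence estimate: on $\Omega_k$ the difference $u_m-u_k$ does not vanish on $\partial\Omega_k$ (only $u_k$ does), so you cannot test the equation with $u_m-u_k$ and integrate by parts as stated; instead test with $\eta^2\left(u_m-u_k\right)$ for a cutoff $\eta$ equal to $1$ on $U$ and supported in $\Omega_k$ (or appeal to interior elliptic estimates), after which the extra term involving $\left|\nabla\eta\right|^2\left|u_m-u_k\right|^2$ also tends to zero by dominated convergence. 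The paper itself leaves the verification of Definition \ref{def:weak_solution} implicit, so your added detail is welcome once this adjustment is made.
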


\begin{proof}
Again we solve (\ref{eq: elliptic_prob_gen}) in $\Omega_m$, and call the solution
obtained $u_m$. To show that $u_{m+1}\geq u_m$ we proceed by induction.
First, notice that
$u_1\geq u_0$, this because $u_1\geq 0$ on $\Omega_1\supset \Omega_0$,
\[
\Delta\left(u_1-u_0\right)=-\lambda f\left(u_0\right)\leq 0,
\]
and thus the maximum principle gives the claim.
Next,
\[
\Delta\left(u_{m+1}-u_m\right)=-\lambda \left(f\left(u_m\right)-f\left(u_{m-1}\right)\right)=
-\lambda f'\left(\xi_{m}\right)\left(u_{m}-u_{m-1}\right),
\]
where $\xi_{m}\left(x\right)$ is an intermediate point between $u_{m}\left(x\right)$
and $u_{m-1}\left(x\right)$. Since $f$ is nondecreasing and $\lambda\geq 0$,
we have that $\lambda f'\left(\xi_m\right)\leq 0$, and since $u_{m+1}\geq 0$
on $\Omega_{m}$, the claim follows from the maximum principle. By Theorem
\ref{thm:mainthm_lipschitz}, we have a uniform bound on the sequence of $u_m$'s and hence there
is a limit $u=\sup_m u_m$, and this is a solution to the BVP (\ref{eq: elliptic_prob_gen})
in the sense of Definition \ref{def:weak_solution} on $\Pi_d$.

\end{proof}

 It is not difficult to show that whenever $\lambda$ and $h$ are H\"older continuous,
the solutions obtained in the proof of the previous theorem are classical solutions.

\subsection{A comment on conformal deformation of metrics a 2D-slab}
\label{sect:conformal_strip}
Theorem \ref{thm:lipschitz_noncompact} implies that in $\Pi_{2\sqrt{2}}\subset \mathbb{R}^2$ there is a metric of constant
positive Gaussian curvature $\lambda$, conformal to the flat
metric, as long as $\lambda\leq 0.1452\dots$. If $\Pi_{2\sqrt{2}}$
were boundaryless and endowed with the metric $g=e^{2u}g_E$, where $g_E$ is the euclidean
metric on $\Pi_{2\sqrt{2}}$, the Bonnet-Myers theorem,
if such metric is complete, would imply that the manifold must be compact. But then, notice
the following: if $\lambda\geq 0$ then the maximum principle implies that $u\geq 0$, and
it is not difficult to prove that as a metric space $\Pi_{2\sqrt{2}}$ with
the distance function induced by $g$ is complete. So,
what is going on here? If we revise the standard proof of the Bonnet-Myers theorem,
then it is clear that we need the existence of a point $p\in \Pi_{2\sqrt{2}}$ that can be joined
to any other point by an unbroken geodesic; hence, in this Riemannian
manifold with boundary, given a point there is another that cannot
be joined to it by an unbroken minimizing geodesic. Finally,
notice that by the results in \cite{Berestycki}, if $\lambda$ is constant, $u$ only depends on $x_2$.

\subsection{Nontrivial solutions of the Lane-Emden system in thin domains.} 
\label{sect:systems}
To show that the Dirichlet problem
\[
-\Delta u =u^p \quad \mbox{in}\quad \Omega, \quad u=0 \quad \mbox{on} \quad \partial \Omega,
\]
has a nontrivial solution in thin unbounded domains, we need
a couple of observations. First an extension to thin bounded 
domains. In this case, we shall assume the closure of the unit ball $\overline{D}_1\subset \Omega$.
We define a function $z$ as follows.

On $D_{1-\eta}$ for $\eta>0$ small enough 
\[
w\left(r\right)=\frac{\epsilon\left(n+4\right)}{4n\left(n+2\right)}
\left[\frac{n}{n+4}r^4-\frac{2\left(n+2\right)}{n+4}r^2+1\right], \quad
\mbox{on}\quad D_{1-\eta}, 
\] 
where $\epsilon$ is such that in $D_1$
(not just on $D_{1-\eta}$)
$w^p > \epsilon\left(1-r^2\right)$,
and $v\equiv 0$ on $\Omega\setminus D_{1+\eta'}$, with $\eta' >0$ also small enough.

Next, in the annulus $D_{1+\eta'}\setminus D_{1-\eta}$, define a rotationally symmetric function $\omega\geq 0$ such that at 
radius $1-\eta$ 
glues smoothly
(that is, at least up to the 
second derivative) to $w$, and at radius $1+\eta'$ glues smoothly to 0. What we require from
$\omega$ is that it is decreasing and $-\Delta \omega \leq \omega^p$ from
radius $1-\eta$ to radius $1$, and past radius $1$ and up to radius $1+\eta'$, $\Delta w\geq 0$. 
In fact, we can have $-\Delta \omega \leq -\Delta w$ and 
$w^p\leq \omega^p$ from radius $1-\eta$ to radius $1$ (see figure \ref{fig:profile}).

Thus, define $z$ as $w$ from radius 0 to radius $1-\eta$, as $\omega$ from radius 
$1-\eta$ to radius $1+\eta'$, and as 0 outside the ball $D_{1+\eta'}$.
For a given $p\leq \frac{1}{2}$, it is easy to show that 
$z$ satisfies $-\Delta v\leq v^p$.

From this we can set up an iteration to find
nontrivial solutions to the Dirichlet problem
with $p\leq 1/2$, 
in a bounded 
domain of slab diameter $d$. It is not difficult to check that these solutions can be 
explicitly bounded,
and the bound depends on the slab diameter of the domain.
Since the power functions $x^r$, $r>0$, are increasing, this allows us to use the
arguments used in the proofs of Theorems 
\ref{thm:discontinuous_noncompact} and \ref{thm:lipschitz_noncompact} to show that the
Dirichlet problem has a nontrivial solution in $\Omega=\Pi_d$. Again,
this argument holds for more general unbounded domains, as long as they are thin.

\begin{center}
\begin{figure}[H]
\begin{tikzpicture}

\begin{axis}[xmin=-2, xmax=2, ymin=-0.05, ymax=0.25,
 xtick = {-1.5,0,1.5}, ytick = { 1},
 scale=1, restrict y to domain=-1.5:1.2,
 axis x line=center, axis y line= center,
 samples=40
 ]

 \addplot[black, samples=100, smooth, domain=-1.0:1, thick]
   plot (\x, {(3/16)*((1/3)*x^4-(4/3)*x^2+1)});
   
 \addplot[smooth] coordinates {
   (0.8, 0.053)
   (0.81, 0.049)
   (0.82, 0.045)
   (0.85, 0.038)
   (0.9, 0.025)
   (1, 0.010)
   (1.05, 0.005)
   (1.07, 0.003)
   (1.1, 0.002)
   (1.15, 0.001)   
   (1.2, 0)
   (1.22, 0)
   (1.25, 0)
   };  
   \draw[arrows=<-] (1.1, 0.01)--(1.3, 0.02);
   \draw[arrows=<-] (0.7, 0.10)--(1.0, 0.11);
   \draw[arrows=<-] (1.201, 0.0005) -- (1.65, 0.012);
   \draw (0.8, 0.053)--(0.8, -0.005);
   \node at (1.35, 0.03) {$\omega$};
   \node at (1.1, 0.11) {$w$};
   \node at (0.8, -0.01) {\tiny{$1-\eta$}};
   \node at (1.8, 0.02) {\tiny{$1+\eta'$}};
\end{axis}

\end{tikzpicture}

\caption{Profile view of the function $z$.}\label{fig:profile}
\end{figure}
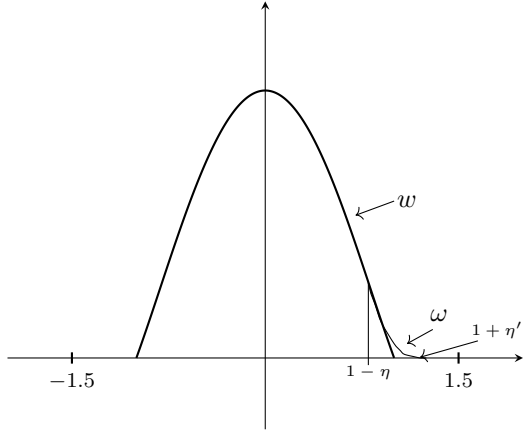
\end{center}

Using $z$ (assume of course that $\overline{D}_1\left(0\right)\subset \Omega$), we can also construct 
nontrivial solutions to systems of the form 
\[
-\Delta u = v^p, \quad -\Delta v = u^q, \quad u=v=0 \quad \mbox{on} \quad \partial \Omega, 
\]
as follows. If $p, q\leq 1/2$, we can choose $z$ so that 
\[
-\Delta z \leq z^p \quad \mbox{and}\quad -\Delta z\leq z^q.
\]
Define in $\Omega$ the iteration
\[
-\Delta u_{n+1}=v_{n}^{p}, \quad -\Delta v_{n+1}=u_{n}^q, \quad u_{n+1}=v_{n+1}=0 \quad \mbox{on}
\quad \partial \Omega,
\]
with $u_{0}=v_{0}=z$.

Using the maximum principle and induction it is not difficult to show that $u_{n+1}\geq u_{n}$
and $v_{n+1}\geq v_{n}$. From this it follows that
the sequence of solutions $\left(u_n, v_n\right)$ to the family of Dirichlet problems defined
in the iteration, as it can be shown to be bounded above, this bound only depending on the slab diameter of $\Omega$, 
converges to a nontrivial solution of the system.

This can be extended to thin unbounded domains. We sketch how to do it in the slab $\Pi_d$.
In $\Omega_{m}$ we can solve 
\[
-\Delta u_{m, n+1}=v_{m, n}^{p}, \quad -\Delta v_{m, n+1}=u_{m,n}^q, \quad u_{m,n+1}=v_{m, n+1}=0 \quad \mbox{on}
\quad \partial \Omega_{m},
\]
and here $u_{m,0}=v_{m,0}=z$. Again, it can be shown, using the maximum principle, that
$u_{m+1,n}\geq u_{m,n}$ and $v_{m+1,n}\geq v_{m,n}$, and from this we get that the solutions to the
Dirichlet problem 
\[
-\Delta u_m = v_m^p, \quad -\Delta v = u_m^q, \quad u_m=v_m=0 \quad \mbox{on} \quad \partial \Omega_m, 
\]
constructed via iteration, satisfy $u_{m+1}\geq u_m$ and $v_{m+1}\geq v_m$, and thus we can produce a nontrivial solution pair to the 
Dirichlet problem in $\Pi_d$.

\end{document}